\author{ K\'aroly Bezdek 
\thanks{Partially supported by a Natural Sciences and 
Engineering Research Council of Canada Discovery Grant.}}
\font\tenBbb=msbm10 at 10pt         \font\sevenBbb=msbm9    \font\fiveBbb=msbm7
\def\S{{\mathbb S}}
\def\E{{\mathbb E}}
\def\oo{{\bf o}}
\def\bb{{\bf b}}
\def\cc{{\bf c}}
\def\aa{{\bf a}}
\def\pp{{\bf p}}
\def\qq{{\bf q}}
\def\tt{{\bf t}}
\def\xx{{\bf x}}
\def\yy{{\bf y}}
\def\KK{{\mathcal K}}
\def\LL{{\mathcal L}}
\def\BB{{\mathcal B}}
\def\PP{{\mathcal P}}
\newtheorem{theorem}{Theorem}[section]
\newtheorem{sublemma}[theorem]{Sublemma}
\newtheorem{remark}[theorem]{Remark}
\newtheorem{proposition}[theorem]{Proposition}
\newtheorem{con}[theorem]{Conjecture}
\newtheorem{prob}[theorem]{Problem}
\newtheorem{definition}[theorem]{Definition}
\title{On a strengthening of the Blaschke-Leichtweiss theorem 
\footnote{Keywords: spherical $d$-space, convex body of constant width, wide $r$-disk domain, wide $r$-ball body, inradius, circumradius, width, volume, Blaschke-Leichtweiss theorem, Jung theorem, isodiametric inequality, Cauchy's arm lemma. 
\newline \hspace*{.35cm} 2000 Mathematical Subject Classification. Primary: 52A10, 52A38, 52A55, Secondary: 52A20, 52A40.}}
\begin{document}

\maketitle

\date

\begin{abstract}
The Blaschke-Leichtweiss theorem (Abh. Math. Sem. Univ. Hamburg 75: 257--284, 2005) states that the smallest area convex domain of constant width $w$ in the $2$-dimensional spherical space $\S^2$ is the spherical Reuleaux triangle for all $0<w\leq\frac{\pi}{2}$. In this paper we extend this result to the family of wide $r$-disk domains of $\S^2$, where  $0<r\leq\frac{\pi}{2}$. Here a wide $r$-disk domain is an intersection of spherical disks of radius $r$ with centers contained in their intersection. This gives a new and elementary proof of the Blaschke-Leichtweiss theorem. Furthermore, we investigate the higher dimensional analogue of wide $r$-disk domains called wide $r$-ball bodies. In particular, we determine their minimum spherical width (resp., inradius) in the spherical $d$-space $\S^d$ for all $d\geq 2$. Also, it is shown that any minimum volume wide $r$-ball body is of constant width $r$ in $\S^d$, $d\geq 2$.
\end{abstract}

\medskip

\section{Introduction}

\subsection{On the definition of convex bodies of constant width in spherical space}

Let $\S^d=\{\mathbf{x}\in\E^{d+1} \ |\ \|\mathbf{x}\|=\sqrt{\langle\mathbf{x},\mathbf{x}\rangle}=1\}$ be the unit sphere centered at the origin $\oo$ in the $(d+1)$-dimensional Euclidean space $\E^{d+1}$, where $\|\cdot\|$ and $\langle \cdot , \cdot \rangle$ denote the canonical Euclidean norm and the canonical inner product in $\E^{d+1}$, $d\geq2$. A $(d-1)$-dimensional great sphere of $\S^d$ is an intersection of $\S^d$ with a hyperplane of $\E^{d+1}$ passing through $\oo$ (i.e., with a $d$-dimensional linear subspace of $\E^{d+1}$). In particular, an intersection of $\S^d$ with a $2$-dimensional linear subspace in $\E^{d+1}$ is called a great circle of $\S^d$. Two points are called antipodes if they can be obtained as an intersection of $\S^d$ with a line through $\oo$ in $\E^{d+1}$. If $\aa ,\bb\in\S^d$ are two points that are not antipodes, then we label the (uniquely determined) shortest geodesic arc of $\S^d$ connecting $\aa$ and $\bb$ by $\aa\bb$. In other words, $\aa\bb$ is the shorter circular arc with endpoints $\aa$ and $\bb$ of the great circle $\widehat{\aa\bb}$ that passes through $\aa$ and $\bb$. The length of $\aa\bb$ is called the spherical distance between $\aa$ and $\bb$ and it is labelled by ${\rm dist}_s (\aa,\bb)$. Clearly, $0<{\rm dist}_s (\aa,\bb)<\pi$. If $\aa,\bb\in\S^d$ are antipodes, then we set ${\rm dist}_s (\aa,\bb)=\pi$. Let $\xx\in\S^d$ and $r\in (0,\frac{\pi}{2}]$. Then the set 
$${\bf B}_s^d[\xx, r]:=\{\yy\in\S^d\ |\ {\rm dist}_s (\xx,\yy)\leq r\}\ ({\rm resp.,}\ {\bf B}_s^d(\xx, r):=\{\yy\in\S^d\ |\ {\rm dist}_s (\xx,\yy)< r\})$$ 
is called the $d$-dimensional closed (resp., open) spherical ball, or shorter the $d$-dimensional closed (resp., open) ball, centered at $\xx$ having (spherical) radius $r$ in $\S^d$. In particular, ${\bf B}_s^d[\xx, \frac{\pi}{2}]$ (resp., ${\bf B}_s^d(\xx, \frac{\pi}{2})$) is called the closed (resp., open) hemisphere of $\S^d$ with center $\xx$. Moreover, ${\bf B}_s^2[\xx, r]$ (resp., ${\bf B}_s^2(\xx, r)$) is called the closed (resp., open) disk with center $\xx$ and (spherical) radius $r$ in $\S^2$. Now, the boundary of ${\bf B}_s^d[\xx,r]$ (resp., ${\bf B}_s^d(\xx, r)$) in $\S^d$ is called the $(d-1)$-dimensional sphere $S_s^{d-1}(\xx, r)$ with center $\xx$ and (spherical) radius $r$ in $\S^d$. As a special case, the boundary of the disk ${\bf B}_s^2[\xx, r]$ (resp., ${\bf B}_s^2(\xx, r)$) in $\S^2$ is called the circle with center $\xx$ and of (spherical) radius $r$ and it is labelled by $S_s^1(\xx, r)$. We introduce the following additional notations. For a set $X\subseteq\S^d$ and $r\in (0,\frac{\pi}{2}]$ let
$${\bf B}_s^d[X, r]:= \bigcap_{\xx\in X}{\bf B}_s^d[\xx, r]\ {\rm and}\ {\bf B}_s^d(X, r):= \bigcap_{\xx\in X}{\bf B}_s^d(\xx, r).$$
Another basic concept is spherical convexity: we say that $Q\subset\S^d$ is spherically convex if it has no antipodes and for any two points $\xx,\yy\in Q$ we have $\xx\yy\subseteq Q$. (It follows that there exists $\qq\in \S^d$ such that $Q\subseteq {\bf B}_s^d(\qq, \frac{\pi}{2})$ with ${\bf B}_s^d(\xx, \frac{\pi}{2})$ being spherically convex.) As the intersection of spherically convex sets is spherically convex therefore if $X\subset {\bf B}_s^d(\xx, \frac{\pi}{2})$, then we define the spherical convex hull ${\rm conv}_sX$ of $X$ as the intersection of spherically convex sets containing $X$. By a convex body in $\S^d$ (resp., a convex domain in $\S^2$) we mean a closed spherically convex set with non-empty interior in $\S^d$ (resp., in $\S^2$). Let $\KK_s^d, d\geq 2$ denote the family of convex bodies in $\S^d$. If $Q\subseteq\S^d, d\geq 2$, then its spherical diameter is ${\rm diam}_s(Q):=\sup\{{\rm dist}_s(\xx,\yy)\ |\ \xx,\yy\in Q\}$. The following brief definition serves the purpose of our introduction as well as motivates our strengthening of the Blaschke-Leichtweiss theorem in an efficient way. On the other hand, we call the attention of the interested reader to some equivalent definitions  that are discussed in the articles \cite{Be12}, \cite{De95}, \cite{HaWu}, \cite{La15},  \cite{LaMu}, \cite{La20A}, \cite{La20}, \cite{Le05} and are surveyed in \cite{La21}.
\begin{definition}\label{core-concept}
Let ${\bf K}\subset\S^d, d\ge2$ be a closed set with spherical diameter $0<w:={\rm diam}_s({\bf K})\leq\frac{\pi}{2}$. We say that ${\bf K}$ is a convex body of constant width $w$ in $\S^d$ if ${\bf K}={\bf B}_s^d[{\bf K}, w]$. Let $\KK_s^d(w)$ denote the family of convex bodies of constant width $w$ in $\S^d$ for $d\ge 2$ and $0<w\leq\frac{\pi}{2}$.  
\end{definition}

Clearly, $\KK_s^d(w)\subset\KK_s^d$ for all $d\ge 2$ and $0<w\leq\frac{\pi}{2}$. 

\subsection{The Blaschke-Leichtweiss theorem}

The classical Blaschke-Lebesgue theorem states that in the Euclidean plane among all convex sets of constant width $w'>0$ the Reuleaux triangle minimizes area.
Here the Reuleaux triangle is the intersection of three disks of radius $w'$ with centers at the vertices of an equilateral triangle of side length $w'$. For a survey on this
theorem and its impact on extremal geometry we refer the interested reader to the recent elegant papers \cite{Ka09} and \cite{KaWe11}. Very different proofs of 
this theorem were given by Blaschke \cite{Bl15}, Lebesgue \cite{Le14}, Fujiwara \cite{Fu27}, Eggleston \cite{Eg52}, Besicovich \cite{Be63}, Ghandehari \cite{Gh96}, 
Campi, Colesanti, and Gronchi \cite{CCG96}, Harrell \cite{Ha02}, and M. Bezdek \cite{Be11}. So, it is natural to ask whether any of these proofs can be extended 
to $\S^2$. Actually, Blaschke claimed that this can be done with his Euclidean proof (see \cite{Bl15}, p. 505), but one had to wait until Leichtweiss
did it (using some ideas of Blaschke) in \cite{Le05}. So, we call the following statement the Blaschke-Leichtweiss theorem: if ${\bf K}\in\KK_s^2(w)$ with $0<w\leq\frac{\pi}{2}$,
then 
\begin{equation}\label{BL-inequality}
{\rm area}_s({\bf K})\geq{\rm area}_s({\bf \Delta}_2(w)),
\end{equation} 
where ${\rm area}_s(\cdot)$ refers to the spherical area of the corresponding set in $\S^2$ and ${\bf \Delta}_2(w)$ denotes the spherical Reuleaux triangle
which is the intersection of three disks of radius $w$ with centers at the vertices of a spherical equilateral triangle of side length $w$. As the only known proof of (\ref{BL-inequality}) is the one published in \cite{Le05} which is a combination of geometric and analytic ideas presented on twenty pages, one might wonder whether there is a simpler approach. This paper intends to fill this gap by proving a stronger result (Theorem~\ref{BL-generalized}) in a new and elementary way. The Euclidean analogue of Theorem~\ref{BL-generalized} has already been proved for disk-polygons in \cite{Be11} and our proof of Theorem~\ref{BL-generalized} presented below is an extension of the Euclidean technique of \cite{Be11} to $\S^2$ combined with the properly modified spherical method of \cite{BeBl}. For the sake of completeness we note that in \cite{BeBl} the author and Blekherman proved a spherical analogue of P\'al's theorem stating that the minimal spherical area convex domain of given minimal spherical width $\omega$ is a regular spherical triangle for all $0<\omega\leq\frac{\pi}{2}$. 

\subsection{The Blaschke-Leichtweiss theorem extended: minimizing the area of wide $r$-disk domains in $\S^2$}

The following definition introduces wide $r$-disk domains in $\S^2$ the spherical areas of which we wish to minimize for given $0<r\leq\frac{\pi}{2}$.

\begin{definition}\label{wide-disk-domain-polygon}
Let $0<r\leq\frac{\pi}{2}$ be given and let $\emptyset\neq X$ be a closed subset of $\S^2$ with ${\rm diam}_s(X)\leq r$. Then ${\bf B}_s^2[X, r]$
is called the wide $r$-disk domain generated by $X$ in $\S^2$. The family of wide $r$-disk domains of $\S^2$ is labelled by $\BB^2_{s,{\rm wide}}(r)$. 
\end{definition}

\noindent Now, (\ref{BL-inequality}) can be generalized as follows.

\begin{theorem}\label{BL-generalized}
Let $0<r\leq\frac{\pi}{2}$ and ${\bf D}\in \BB^2_{s, {\rm wide}}(r)$. Then ${\rm area}_s({\bf D})\geq{\rm area}_s({\bf \Delta}_2(r))$.
\end{theorem}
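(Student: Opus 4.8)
The plan is to reduce Theorem~\ref{BL-generalized} to the special case of spherical convex bodies of constant width $r$ and then to prove that case by elementary means (thereby reproving the Blaschke-Leichtweiss theorem). For the reduction, let ${\bf D}={\bf B}_s^2[X,r]\in\BB^2_{s,{\rm wide}}(r)$. Since $\partial{\bf D}\subseteq\bigcup_{\xx\in X}S_s^1(\xx,r)$, there is a boundary point $\pp\in\partial{\bf D}$ with ${\rm dist}_s(\pp,\xx_0)=r$ for some $\xx_0\in X$; as $\pp\in{\bf D}$ we have ${\rm dist}_s(\pp,\xx)\le r$ for every $\xx\in X$, so ${\rm diam}_s(X\cup\{\pp\})=r$. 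Completing $X\cup\{\pp\}$ to a diametrically maximal (complete) set $\tilde X$ of spherical diameter $r$ yields, by Definition~\ref{core-concept}, a body ${\bf K}:=\tilde X={\bf B}_s^2[\tilde X,r]\in\KK_s^2(r)$ of constant width $r$, and since $\tilde X\supseteq X$ we get ${\bf K}={\bf B}_s^2[\tilde X,r]\subseteq{\bf B}_s^2[X,r]={\bf D}$, hence ${\rm area}_s({\bf D})\ge{\rm area}_s({\bf K})$. Thus it suffices to prove ${\rm area}_s({\bf K})\ge{\rm area}_s({\bf \Delta}_2(r))$ for all ${\bf K}\in\KK_s^2(r)$.

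Next I would reduce the constant-width problem to spherical Reuleaux polygons. Every ${\bf K}\in\KK_s^2(r)$ can be approximated in the Hausdorff metric by constant-width-$r$ spherical Reuleaux polygons $P_n$ with ${\rm area}_s(P_n)\to{\rm area}_s({\bf K})$, so it is enough to bound the areas of these polygons from below by ${\rm area}_s({\bf \Delta}_2(r))$. A spherical Reuleaux polygon of width $r$ has an odd number of radius-$r$ boundary arcs, each centred at the opposite vertex, and its vertex set has spherical diameter $r$; the regular three-arc case is exactly ${\bf \Delta}_2(r)$.

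The heart of the argument, following the disk-polygon technique of \cite{Be11} combined with the spherical method of \cite{BeBl}, is to show that a spherical Reuleaux polygon with more than three arcs can be deformed---within $\KK_s^2(r)$---so that its area does not increase and the number of arcs drops. Concretely, I would keep one arc and its centre fixed and flex the consecutive chain of arc-centres about the shared vertices, monitoring how the enclosed spherical area changes; Cauchy's arm lemma, applied on $\S^2$ to the spherical polygon spanned by the arc-centres, guarantees that opening (or closing) this chain changes the area monotonically, so one can decrease the number of arcs to three without increasing area. Iterating produces ${\bf \Delta}_2(r)$, which completes the proof together with the reductions above.

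I expect the third step to be the main obstacle. In the Euclidean plane the sign of the area variation under such a vertex-merging deformation is classical, but on $\S^2$ the absence of a translation group forces one to compute, via genuine spherical trigonometry, the areas of the circular segments cut off by the radius-$r$ arcs and to track them as the centre-chain is flexed; obtaining the correct monotonicity uniformly for all $0<r\le\frac{\pi}{2}$ is exactly where the spherical form of Cauchy's arm lemma must be invoked with care. A secondary, but essential, point is the very first reduction: one must be sure that the added point $\pp$ can always be chosen at distance exactly $r$ from a generator, so that the completion ${\bf K}$ has width precisely $r$ rather than some smaller value---otherwise the comparison would only yield the weaker bound ${\rm area}_s({\bf \Delta}_2(\delta))$ with $\delta<r$.
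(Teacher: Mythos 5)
Your opening reduction is sound, and it coincides with Proposition~\ref{reduction-corollary} of the paper: adjoining a boundary point $\pp$ at spherical distance exactly $r$ from some generator, completing $X\cup\{\pp\}$ to a diametrically maximal set of diameter $r$, and invoking the monotonicity of the $r$-dual operation does yield a constant-width-$r$ body inside ${\bf D}$ (and your concern about ending up with width $\delta<r$ is correctly resolved by the choice of $\pp$). The genuine gap is the third step, which is the entire content of the theorem. Cauchy's arm lemma is a statement about the monotonicity of the \emph{distance} between the endpoints of a convex chain as its angles are opened; it says nothing about enclosed \emph{area}. The assertion that flexing the chain of arc-centres of a spherical Reuleaux polygon ``changes the area monotonically,'' so that arcs can be merged without increasing area, is precisely the delicate variational/trigonometric fact that the Euclidean Blaschke--Lebesgue proofs via Reuleaux polygons must establish by a separate computation, and its spherical version is exactly the twenty-page difficulty of Leichtweiss's argument \cite{Le05} that this paper is written to avoid. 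You neither prove this monotonicity nor the prerequisite approximation statement that every ${\bf K}\in\KK_s^2(r)$ is a Hausdorff limit of spherical Reuleaux polygons of width $r$; both are asserted, and you yourself flag the former as an ``obstacle.'' As written, the argument does not close.

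For contrast, the paper takes a different route that bypasses Reuleaux polygons entirely: it works with the inscribed disk of ${\bf D}$, uses the spherical Jung theorem (Proposition~\ref{Min-Inradius}) and the minimal-width bound (Proposition~\ref{Min-Width}) to attach three non-overlapping caps of controlled height to the indisk at its contact points, obtaining a cap-domain contained in ${\bf D}$; it then shrinks and symmetrizes this cap-domain and compares it with ${\bf \Delta}_2(r)$ by a dissection in which Cauchy's arm lemma enters only in the elementary form of Sublemma~\ref{Cauchy-applied}, to compare the lengths of two circular arcs of radius $r$. If you want to salvage your plan, the missing monotonicity-of-area lemma is where all the work lies; alternatively, note that your first reduction is unnecessary for the paper's method, which handles wide $r$-disk domains directly.
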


\noindent As ${\bf \Delta}_2(r)\in\KK_s^2(r)\subset\BB^2_{s, {\rm wide}}(r)$ holds for all $0<r\leq\frac{\pi}{2}$ therefore (\ref{BL-inequality}) follows from Theorem~\ref{BL-generalized} in a straightforward way. We note that our method of proving Theorem~\ref{BL-generalized} is completely different from the ideas and techniques used in \cite{Le05}.

The rest of the paper is organized as follows. Section 3 gives a proof of Theorem~\ref{BL-generalized} via successive area decreasing cuts and symmetrization. That proof is based on some extremal properties of wide $r$-disk domains, which are discussed in Section 2. Furthermore, Section 2 investigates the higher dimensional analogue of wide $r$-disk domains called wide $r$-ball bodies for $0<r\leq\frac{\pi}{2}$. In particular, we determine their minimum spherical width (resp., inradius) in $\S^d$, $d\geq 2$. Also, it is shown that any minimum volume wide $r$-ball body is of constant width $r$ in $\S^d$, $d\geq 2$.

\section{On minimizing the inradius, width, and volume of wide $r$-ball bodies in $\S^d$ for $d\geq 2$ and $0<r\leq\frac{\pi}{2}$}

It is natural to extend Definition~\ref{wide-disk-domain-polygon} to $\S^d$ thereby introducing the family of wide $r$-ball bodies (resp., wide $r$-ball polyhedra) in $\S^d$ as follows.

\begin{definition}\label{wide-ball-body-polyhedron}
Let $0<r\leq\frac{\pi}{2}$ be given and let $\emptyset\neq X$ be a closed subset of $\S^d$, $d\geq 2$ with ${\rm diam}_s(X)\leq r$. Then 
${\bf B}_s^d[X, r]$
is called a wide $r$-ball body generated by $X$ in $\S^d$. The family of wide $r$-ball bodies of $\S^d$ is labelled by $\BB^d_{s,{\rm wide}}(r)$. If $X\subset\S^d$ with 
$0<{\rm card}(X)<+\infty$ and ${\rm diam}_s(X)\leq r$, then ${\bf B}_s^d[X, r]$ is called a wide $r$-ball polyhedron generated by $X$ in $\S^d$. The family of 
wide $r$-ball polyhedra of $\S^d$ is labelled by $\PP^d_{s, {\rm wide}}(r)$.  
\end{definition}

We leave the straightforward proof of the following claim (using Definition~\ref{wide-ball-body-polyhedron}) to the reader.

\begin{proposition}\label{approximation-general}
Every wide $r$-ball body ${\bf B}_s^d[X, r]\in\BB^d_{s, {\rm wide}}(r)$, $d\geq 2$, $0<r\leq\frac{\pi}{2}$ can be approximated (in the Hausdorff sense) arbitrarily close by a suitable wide $r$-ball polyhedron and therefore there exists a sequence
${\bf P}_n\in \PP^d_{s, {\rm wide}}(r)$, $n=1,2,\dots$ such that $\lim_{n\to +\infty}{\rm vol}_s({\bf P}_n)={\rm vol}_s({\bf B}_s^d[X, r])$, where ${\rm vol}_s(\cdot)$ stands for the $d$-dimensional  spherical volume of the corresponding set in $\S^d$.
\end{proposition}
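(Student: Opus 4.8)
The plan is to approximate ${\bf B}_s^d[X,r]$ from the outside by the wide $r$-ball polyhedra generated by progressively finer \emph{finite subsets of $X$ itself}. First I would exploit the compactness of $X$: since $X$ is a closed subset of the compact sphere $\S^d$, for each $n$ there is a finite $\frac{1}{n}$-net $X_n\subset X$, and by enlarging successively I may arrange $X_1\subseteq X_2\subseteq\cdots$ with $\bigcup_{n}X_n$ dense in $X$. Each $X_n$ is nonempty, finite, and satisfies ${\rm diam}_s(X_n)\leq{\rm diam}_s(X)\leq r$, so ${\bf P}_n:={\bf B}_s^d[X_n,r]\in\PP^d_{s,{\rm wide}}(r)$ is a genuine wide $r$-ball polyhedron. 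This is the step where drawing the net from inside $X$ is essential, since it forces the diameter constraint for free and yields a clean one-sided (outer) approximation.

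Next I would record the monotonicity built into the construction. Because $X_n\subseteq X_{n+1}\subseteq X$, intersecting more balls only shrinks the result, so ${\bf P}_1\supseteq{\bf P}_2\supseteq\cdots\supseteq{\bf B}_s^d[X,r]$ is a nested decreasing sequence of compact sets each containing the target body. The core identity to establish is $\bigcap_{n}{\bf P}_n={\bf B}_s^d[X,r]$. The reverse inclusion is immediate from the nesting; the nontrivial inclusion $\bigcap_{n}{\bf P}_n\subseteq{\bf B}_s^d[X,r]$ is the crux. If $\yy$ lies in every ${\bf P}_n$, then ${\rm dist}_s(\xx,\yy)\leq r$ for every $\xx\in\bigcup_n X_n$, and since this union is dense in $X$ while $\yy\mapsto{\rm dist}_s(\xx,\yy)$ is continuous, the inequality persists for all $\xx\in X$; hence $\yy\in{\bf B}_s^d[X,r]$.

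From here the two conclusions follow by standard compactness and measure-theoretic facts. For the Hausdorff approximation, since ${\bf B}_s^d[X,r]\subseteq{\bf P}_n$ it suffices to bound $\sup_{\yy\in{\bf P}_n}{\rm dist}_s(\yy,{\bf B}_s^d[X,r])$; were this not to tend to $0$, I would extract by compactness of $\S^d$ a convergent subsequence of witnesses $\yy_{n_k}\to\yy$ staying a fixed distance away from ${\bf B}_s^d[X,r]$, yet the nesting forces $\yy\in\bigcap_n{\bf P}_n={\bf B}_s^d[X,r]$, a contradiction. For the volumes, the sets ${\bf P}_n$ decrease to ${\bf B}_s^d[X,r]$, so downward continuity of the finite spherical measure ${\rm vol}_s$ on $\S^d$ gives $\lim_{n\to+\infty}{\rm vol}_s({\bf P}_n)={\rm vol}_s\big(\bigcap_n{\bf P}_n\big)={\rm vol}_s({\bf B}_s^d[X,r])$. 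The only genuinely delicate point is the density argument underlying the core identity; everything else is routine, and the guiding principle throughout is that choosing the approximating centers from $X$ rather than from a neighborhood is exactly what simultaneously guarantees membership in $\PP^d_{s,{\rm wide}}(r)$ and the monotone outer convergence.
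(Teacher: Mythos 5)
Your argument is correct. The paper explicitly leaves the proof of this proposition to the reader as ``straightforward,'' so there is no written proof to compare against; your route---nested finite $\frac{1}{n}$-nets $X_n\subset X$, the density/continuity argument giving $\bigcap_n{\bf B}_s^d[X_n,r]={\bf B}_s^d[X,r]$, and then one-sided Hausdorff convergence plus downward continuity of the finite spherical measure---is precisely the natural argument the author intends, and all of its steps (nonemptiness and diameter bound for $X_n$, monotonicity of the intersection, compactness extraction) check out.
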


Although the question of finding an extension of Theorem~\ref{BL-generalized} to $\S^d$ for $d\geq 3$ seems to be a natural one, it is a considerably more difficult problem than it appears at first sight. Based on Proposition~\ref{approximation-general} we can phrase it as follows.

\begin{prob}\label{BL-genaralized-in-d-dimensions}
Find
$$c_{BL}(r, d):=\inf\{{\rm vol}_s({\bf B}_s^d[X, r])\ |\ {\bf B}_s^d[X, r]\in\BB^d_{s, {\rm wide}}(r)\}$$ $$=\inf\{{\rm vol}_s({\bf P}_s^d[X, r])\ |\ {\bf P}_s^d[X, r]\in\PP^d_{s, {\rm wide}}(r)\}$$
for given $0<r\leq\frac{\pi}{2}$ and $d\ge 3$.
\end{prob}

Proposition~\ref{Min-Inradius} can be used to lower bound $c_{BL}(r, d)$ with the spherical volume of a properly chosen ball. 

\begin{definition}\label{inradius-circumradius}
The smallest ball (resp., the largest ball) containing (resp., contained in) the convex body ${\bf K}\in\KK_s^d$, $d\ge 2$ is called the circumscribed (resp., inscribed) ball of ${\bf K}$ whose radius $R_{\rm cr}({\bf K})$ (resp., $R_{\rm in}({\bf K})$) is called the circumradius (resp., inradius) of ${\bf K}$.
\end{definition}

\begin{proposition}\label{Min-Inradius}
Let ${\bf B}_s^d[X, r]\in\BB^d_{s, {\rm wide}}(r)$ with $d\geq 2$ and $0<r\leq\frac{\pi}{2}$. Then $$R_{\rm in}({\bf B}_s^d[X, r]) \geq R_{\rm in}({\bf \Delta}_d(r)),$$
where ${\bf \Delta}_d(r)\in\BB^d_{s, {\rm wide}}(r)$ denotes the intersection of $d+1$ closed balls of radii $r$ centered at the vertices of a regular spherical $d$-simplex of edge length $r$ in $\S^d$.
\end{proposition}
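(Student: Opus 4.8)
The plan is to reduce the statement to the spherical Jung theorem via an exact formula for the inradius of a wide $r$-ball body in terms of the circumradius of its generating set. Write $R_{\rm cr}(X)$ for the circumradius of the closed set $X$, i.e., the radius of the smallest ball of $\S^d$ containing $X$; since ${\rm diam}_s(X)\le r\le\frac{\pi}{2}$ forces all pairwise spherical distances in $X$ to be at most $\frac{\pi}{2}$, the set $X$ is contained in an open hemisphere of $\S^d$, the enclosing ball is unique, and $R_{\rm cr}(X)=R_{\rm cr}({\rm conv}_s X)$ agrees with Definition~\ref{inradius-circumradius}. The first step is to establish the identity
\begin{equation*}
R_{\rm in}({\bf B}_s^d[X, r]) = r - R_{\rm cr}(X),
\end{equation*}
valid for every generating set $X$ with ${\rm diam}_s(X)\le r\le\frac{\pi}{2}$.

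This identity rests on an elementary ball-nesting lemma: for $\cc,\xx\in\S^d$ and $0<\rho\le r\le\frac{\pi}{2}$ one has ${\bf B}_s^d[\cc,\rho]\subseteq{\bf B}_s^d[\xx,r]$ if and only if ${\rm dist}_s(\cc,\xx)\le r-\rho$. The ``if'' direction is the spherical triangle inequality applied to an arbitrary point of ${\bf B}_s^d[\cc,\rho]$; for the ``only if'' direction I would extend the minimizing geodesic from $\xx$ through $\cc$ by a further arc of length $\rho$ to a point $\yy\in{\bf B}_s^d[\cc,\rho]$ with ${\rm dist}_s(\xx,\yy)={\rm dist}_s(\xx,\cc)+\rho$ (since ${\rm dist}_s(\xx,\cc)+\rho\le\frac{\pi}{2}+\frac{\pi}{2}=\pi$ the concatenated arc is still minimizing), so that the inclusion forces ${\rm dist}_s(\xx,\cc)+\rho\le r$. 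Intersecting over $\xx\in X$, the ball ${\bf B}_s^d[\cc,\rho]$ lies in ${\bf B}_s^d[X,r]=\bigcap_{\xx\in X}{\bf B}_s^d[\xx,r]$ exactly when $X\subseteq{\bf B}_s^d[\cc,r-\rho]$; this is solvable in $\cc$ precisely when $\rho\le r-R_{\rm cr}(X)$, with the extremal value $\rho=r-R_{\rm cr}(X)$ attained at the circumcenter of $X$. Taking the supremum over all inscribed balls gives the displayed identity.

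With the identity in hand the proof concludes quickly. Applying it to ${\bf \Delta}_d(r)={\bf B}_s^d[V, r]$, where $V$ is the vertex set of the regular spherical $d$-simplex of edge length $r$, yields $R_{\rm in}({\bf \Delta}_d(r))=r-R_{\rm cr}(V)$, so the asserted inequality $R_{\rm in}({\bf B}_s^d[X,r])\ge R_{\rm in}({\bf \Delta}_d(r))$ becomes equivalent to
\begin{equation*}
R_{\rm cr}(X)\le R_{\rm cr}(V).
\end{equation*}
This is precisely the content of the spherical Jung theorem: among all subsets of $\S^d$ of spherical diameter at most $r\,(\le\frac{\pi}{2})$, the regular spherical simplex of edge length $r$ has the largest circumradius. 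I would invoke this in the sharp form due to Dekster \cite{De95}, which simultaneously supplies the bound and identifies the regular simplex as the extremizer; as a byproduct $R_{\rm cr}(V)<r$, so all the inscribed balls above are nondegenerate and ${\bf B}_s^d[X,r]$ is a genuine convex body.

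I expect the main obstacle to be pinning down the correct, quotable formulation of the spherical Jung theorem together with the regular simplex as its extremal configuration. The inradius identity and the ball-nesting lemma are robust, but they hinge on the hypothesis $r\le\frac{\pi}{2}$: this is what guarantees that balls are convex, that the circumcenter is unique, and that the geodesic-extension step in the lemma stays within total length $\pi$. Pushing past the hemisphere threshold would destroy both the uniqueness of the circumcenter and the nesting criterion, so I would keep the constraint $0<r\le\frac{\pi}{2}$ explicit throughout.
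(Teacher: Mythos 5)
Your proposal is correct and follows essentially the same route as the paper: the spherical Jung theorem of Dekster combined with the ball-nesting observation that $X\subseteq{\bf B}_s^d[\cc,r-\rho]$ forces ${\bf B}_s^d[\cc,\rho]\subseteq{\bf B}_s^d[X,r]$. The only difference is cosmetic --- you upgrade the one inclusion the paper needs to the exact identity $R_{\rm in}({\bf B}_s^d[X,r])=r-R_{\rm cr}(X)$, which is a nice touch but not required for the inequality.
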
 

\begin{proof} 
As ${\bf B}_s^d[X, r]\in\BB^d_{s, {\rm wide}}(r)$ therefore ${\rm diam}_s(X)\leq r$. This and the spherical Jung theorem \cite{Dek95} imply that there exists $\xx_0\in \S^d$ such that $X\subset{\bf B}_s^d[\xx_0, R_{\rm cr}({\bf \Delta}_d(r))]$. It follows that $${\bf B}_s^d[\xx_0,R_{\rm in}({\bf \Delta}_d(r))]={\bf B}_s^d[\xx_0, r-R_{\rm cr}({\bf \Delta}_d(r))]\subset{\bf B}_s^d[X, r]$$ and therefore $R_{\rm in}({\bf B}_s^d[X, r]) \geq R_{\rm in}({\bf \Delta}_d(r))$, finishing the proof of Proposition~\ref{Min-Inradius}.
\end{proof}

For more details on the concepts introduced in Definitions~\ref{Lune},~\ref{Width-Lune}, and~\ref{Min-Spherical-Width}, we refer the interested reader to the recent paper of Lassak \cite{La15}. As usual, we say that the $(d-1)$-dimensional great sphere $C_s^{d-1}(\xx, \frac{\pi}{2})$ is a supporting $(d-1)$-dimensional great sphere of ${\bf K}\in\KK_s^d$ if $C_s^{d-1}(\xx, \frac{\pi}{2})\cap {\bf K}\neq\emptyset$ and ${\bf K}\subset {\bf B}_s^d[\xx,\frac{\pi}{2}]$, in which case ${\bf B}_s^d[\xx,\frac{\pi}{2}]$ is called a closed supporting hemisphere of ${\bf K}$. One can show that through each boundary point of ${\bf K}$ there exists at least one supporting $(d-1)$-dimensional great sphere of ${\bf K}$ moreover, ${\bf K}$ is the intersection of its closed supporting hemispheres. Two hemispheres of $\S^d$ are called opposite if the their centers are antipodes. 
\begin{definition}\label{Lune}
The intersection of two distinct closed hemispheres of $\S^d$ which are not opposite is called a lune of $\S^d$. Let $\LL_s^d$ denote the family of lunes in $\S^d$.
\end{definition}
\noindent Every lune of $\S^d$ is bounded by two $(d-1)$-dimensional hemispheres (lying on two distinct $(d-1)$-dimensional great spheres of $\S^d$) sharing pairs of antipodes in common, which are called the vertices of the lune. 
\begin{definition}\label{Width-Lune}
The angular measure of the angle formed by the two $(d-1)$-dimensional hemispheres bounding the lune ${\bf L}\in\LL_s^d$ (which is equal to spherical distance of the centers of the two $(d-1)$-dimensional hemispheres bounding ${\bf L}$) is called the spherical width of the given lune labelled by ${\rm width}_s({\bf L})$. 
\end{definition}

\begin{definition}\label{Min-Spherical-Width}
For every closed supporting hemisphere ${\bf H}$ of the convex body ${\bf K}\in \KK_s^d$ there exists a closed supporting hemisphere ${\bf H}'$ of ${\bf K}$ such that the lune ${\bf H}\cap{\bf H}'$ has minimal width for given ${\bf H}$ and ${\bf K}$. We call ${\rm width}_s({\bf H}\cap{\bf H}')$ the width of ${\bf K}$ determined by ${\bf H}$ and label it by ${\rm width}_{\bf H}({\bf K})$. Finally, the minimal spherical width (also called thickness) ${\rm width}_s({\bf K})$ of ${\bf K}$ is the smallest spherical width of the lunes that contain ${\bf K}$, i.e., ${\rm width}_s({\bf K})=\min\{{\rm width}_{\bf H}({\bf K})\ |\ {\bf H}\ {\it is\  a \ closed\  supporting\  hemisphere\  of}\ {\bf K}\}$.  
\end{definition}

\noindent Next, we recall the following claim from \cite{La15} (Claim 2), which is often applicable.

\begin{sublemma}\label{Min-Width-Lune}
Let ${\bf K}\in\KK_s^d$. If ${\bf L}\in\LL_s^d$ contains ${\bf K}$ and ${\rm width}_s({\bf K})={\rm width}_s({\bf L})$, then both centers of the $(d-1)$-dimensional hemispheres bounding ${\bf L}$ belong to ${\bf K}$.
\end{sublemma}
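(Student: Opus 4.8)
The plan is to work in the ambient space $\E^{d+1}$, where ${\rm dist}_s(\xx,\yy)\le\frac{\pi}{2}$ reads $\langle\xx,\yy\rangle\ge0$, and to reduce the whole statement to one infinitesimal–rotation argument. Write $\mathbf L=\mathbf B_s^d[\xx_1,\frac{\pi}{2}]\cap\mathbf B_s^d[\xx_2,\frac{\pi}{2}]$ and set $\omega:={\rm dist}_s(\xx_1,\xx_2)\in(0,\pi)$. The bounding hemisphere of $\mathbf L$ lying on the great sphere $\{\yy\in\S^d\mid\langle\xx_1,\yy\rangle=0\}$ is $E_1=\{\yy\in\S^d\mid\langle\xx_1,\yy\rangle=0,\ \langle\xx_2,\yy\rangle\ge0\}$, and its center (pole) is the point of that great sphere nearest $\xx_2$, namely $\cc_1=(\xx_2-\cos\omega\,\xx_1)/\sin\omega$; symmetrically $\cc_2=(\xx_1-\cos\omega\,\xx_2)/\sin\omega$. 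A direct computation gives $\langle\cc_1,\cc_2\rangle=-\cos\omega$, so ${\rm dist}_s(\cc_1,\cc_2)=\pi-\omega={\rm width}_s(\mathbf L)$, consistent with Definition~\ref{Width-Lune}. By symmetry it suffices to prove $\cc_1\in\mathbf K$.

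First I would record two consequences of the hypotheses $\mathbf K\subseteq\mathbf L$ and ${\rm width}_s(\mathbf L)={\rm width}_s(\mathbf K)=:w$. The thickness equals the least width among all lunes that contain $\mathbf K$, because any containing lune can be tightened without increasing its width by rotating each bounding hemisphere outward until it touches $\mathbf K$; hence $\mathbf L$ is a minimal–width containing lune, and both $\mathbf B_s^d[\xx_i,\frac{\pi}{2}]$ support $\mathbf K$, i.e. $\mathbf K\cap\xx_i^{\perp}\neq\emptyset$ for $i=1,2$. (If, say, $\mathbf K\cap\xx_1^{\perp}=\emptyset$, then $\min_{\yy\in\mathbf K}\langle\xx_1,\yy\rangle>0$, so a small displacement of $\xx_1$ away from $\xx_2$ keeps $\mathbf K\subseteq\mathbf B_s^d[\xx_1,\frac{\pi}{2}]$ while enlarging $\omega$, giving a containing lune of width $<w$, which is impossible.) Consequently, for the supporting hemisphere $\mathbf B_s^d[\xx_2,\frac{\pi}{2}]$ the partner $\mathbf B_s^d[\xx_1,\frac{\pi}{2}]$ is width–optimal, which means precisely that $\xx_1$ maximizes ${\rm dist}_s(\xx_2,\cdot)$ over the centers of all supporting hemispheres of $\mathbf K$; symmetrically for $\xx_2$.

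For the crux, suppose $\cc_1\notin\mathbf K$ and put $F:=\mathbf K\cap\xx_1^{\perp}$, a nonempty closed spherically convex subset of the great sphere $\xx_1^{\perp}\cap\S^d$. Since $F\subseteq\mathbf K$ we have ${\rm conv}_sF\subseteq\mathbf K$, so $\cc_1\notin{\rm conv}_sF$; as all these sets lie in a common hemisphere (as $\mathbf L$ does), separating the point $\cc_1$ from ${\rm conv}_sF$ inside $\xx_1^{\perp}\cap\S^d$ yields a unit vector $\tt\in\xx_1^{\perp}$ with $\langle\tt,\yy\rangle\ge0$ for all $\yy\in F$ but $\langle\tt,\cc_1\rangle<0$. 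Now rotate $\xx_1$ toward $\tt$ by setting $\xx_1(s)=\cos s\,\xx_1+\sin s\,\tt$. On $F$ one has $\langle\xx_1(s),\yy\rangle=\sin s\,\langle\tt,\yy\rangle\ge0$, while off $F$ the strict inequality $\langle\xx_1,\yy\rangle>0$ together with a compactness argument keeps $\langle\xx_1(s),\yy\rangle\ge0$; hence $\mathbf K\subseteq\mathbf B_s^d[\xx_1(s),\frac{\pi}{2}]$ for all small $s>0$. On the other hand $\left.\tfrac{d}{ds}\langle\xx_2,\xx_1(s)\rangle\right|_{s=0}=\langle\xx_2,\tt\rangle=\langle\xx_2-\cos\omega\,\xx_1,\tt\rangle=\sin\omega\,\langle\cc_1,\tt\rangle<0$, so ${\rm dist}_s(\xx_2,\xx_1(s))$ strictly increases. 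After tightening $\mathbf B_s^d[\xx_1(s),\frac{\pi}{2}]$ to a supporting hemisphere of $\mathbf K$, this produces the center of a supporting hemisphere strictly farther from $\xx_2$ than $\xx_1$, contradicting the maximality established above; hence $\cc_1\in\mathbf K$, and likewise $\cc_2\in\mathbf K$. I expect the main obstacle to be exactly this last step: choosing the separating direction $\tt$ in the great sphere $\xx_1^{\perp}\cap\S^d$ and verifying that the induced rotation of $\xx_1$ simultaneously preserves the inclusion $\mathbf K\subseteq\mathbf B_s^d[\xx_1(s),\frac{\pi}{2}]$ and strictly increases ${\rm dist}_s(\xx_1,\xx_2)$, where the identity $\xx_2-\cos\omega\,\xx_1=\sin\omega\,\cc_1$ is precisely what links the separation hypothesis $\langle\tt,\cc_1\rangle<0$ to the required decrease of $\langle\xx_2,\xx_1(s)\rangle$.
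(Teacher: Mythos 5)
The paper does not actually prove this sublemma---it is quoted verbatim from Claim~2 of \cite{La15}---so there is no internal proof to compare against; I am judging your argument on its own. Your overall strategy is sound and close in spirit to Lassak's perturbation arguments: the identification $\cc_1=(\xx_2-\cos\omega\,\xx_1)/\sin\omega$, the identity ${\rm width}_s({\bf L})=\pi-\omega$, the reduction to ``$\xx_1$ maximizes ${\rm dist}_s(\xx_2,\cdot)$ over centers of supporting hemispheres,'' and the derivative computation $\frac{d}{ds}\langle\xx_2,\xx_1(s)\rangle|_{s=0}=\sin\omega\,\langle\cc_1,\tt\rangle$ are all correct. (You could even skip the final ``tightening'': the lune ${\bf B}_s^d[\xx_1(s),\frac{\pi}{2}]\cap{\bf B}_s^d[\xx_2,\frac{\pi}{2}]$ already contains ${\bf K}$ and has width $<{\rm width}_s({\bf K})$, contradicting Definition~\ref{Min-Spherical-Width} directly.)

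There is, however, a genuine hole at exactly the step you flagged as the crux. With only the weak separation $\langle\tt,\yy\rangle\ge0$ on $F={\bf K}\cap\xx_1^{\perp}$, the claimed compactness argument does not close: trouble comes from points of ${\bf K}$ near a point $\yy^*\in F$ at which $\langle\tt,\yy^*\rangle=0$. Concretely, take $d=2$, $\xx_1$ the north pole, $F=\{\yy^*\}$ a single equatorial point at which the boundary of ${\bf K}$ is tangent to the equator to second order, and $\tt$ orthogonal to $\yy^*$; then ${\bf K}$ contains points $\yy$ with $\langle\xx_1,\yy\rangle\sim c\,u^2$ and $\langle\tt,\yy\rangle\sim-u$, so $\cos s\,\langle\xx_1,\yy\rangle+\sin s\,\langle\tt,\yy\rangle<0$ for suitable $u$ no matter how small $s>0$ is, i.e.\ ${\bf K}\not\subseteq{\bf B}_s^d[\xx_1(s),\frac{\pi}{2}]$ for every $s>0$. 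The fix is to demand \emph{strict} separation, $\langle\tt,\yy\rangle>0$ for all $\yy\in F$ and $\langle\tt,\cc_1\rangle<0$, which is in fact available: $F$ is compact, spherically convex and contained in an open hemisphere, so ${\rm cone}(F)$ is a closed pointed convex cone whose intersection with $\S^d$ is $F$; if no such $\tt$ existed, the bipolar theorem would force $\cc_1\in{\rm cone}(F)\cap\S^d=F\subseteq{\bf K}$, contradicting $\cc_1\notin{\bf K}$. With $\min_{F}\langle\tt,\cdot\rangle=\epsilon_0>0$ the two-regime estimate works: on a neighbourhood $U$ of $F$ in ${\bf K}$ one has $\langle\tt,\cdot\rangle\ge\epsilon_0/2$ and $\langle\xx_1,\cdot\rangle\ge0$, while on the compact set ${\bf K}\setminus U$ one has $\langle\xx_1,\cdot\rangle\ge\delta>0$, and both regimes give $\langle\xx_1(s),\cdot\rangle\ge0$ for small $s>0$. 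With that repair (and the same strengthening applied wherever you invoke the separation), your proof is complete.
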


\begin{remark}
We note that Definition~\ref{Min-Spherical-Width} supports to say that the convex body ${\bf K}\in\KK_s^d$ is of constant width $0<w\leq \frac{\pi}{2}$ in $\S^d$ if the width of ${\bf K}$ with respect to any supporting hemisphere is equal to $w$. Theorem 2 of \cite{La20}  proves that this definition of constant width is equivalent to the one under Definition~\ref{core-concept}, i.e., ${\bf K}\in\KK_s^d(w)$ for $d\geq 2$ and $0<w\leq\frac{\pi}{2}$ if and only if ${\rm width}_s({\bf K})={\rm diam}_s({\bf K}) =w$ holds for ${\bf K}\in\KK_s^d$. 
\end{remark}

Now, we are ready to prove the following close relative of Proposition~\ref{Min-Inradius}.

\begin{proposition}\label{Min-Width}
Let ${\bf B}_s^d[X, r]\in\BB^d_{s, {\rm wide}}(r)$ with $d\geq 2$ and $0<r\leq\frac{\pi}{2}$. Then $${\rm width}_s({\bf B}_s^d[X, r]) \geq {\rm width}_s({\bf \Delta}_d(r))=r.$$
\end{proposition}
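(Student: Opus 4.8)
The plan is to reduce to a width-realizing lune and then extract two points of $X$ whose separation is pinned down by the diameter bound. First I would use compactness to produce a lune ${\bf L}\in\LL_s^d$ with ${\bf B}_s^d[X,r]\subseteq{\bf L}$ and ${\rm width}_s({\bf L})={\rm width}_s({\bf B}_s^d[X,r])=:w$, and then apply Sublemma~\ref{Min-Width-Lune} to conclude that the centers $\cc,\cc'$ of the two $(d-1)$-dimensional hemispheres bounding ${\bf L}$ lie in ${\bf B}_s^d[X,r]$, with ${\rm dist}_s(\cc,\cc')={\rm width}_s({\bf L})=w$ by Definition~\ref{Width-Lune}. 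Writing $\pp,\qq$ for the poles of ${\bf L}$ and $\Pi$ for the $2$-plane ${\rm span}(\pp,\qq)$, a short computation identifies $\cc$ (resp.\ $\cc'$) as the point of the great sphere of $\pp$ (resp.\ of $\qq$) nearest to $\qq$ (resp.\ to $\pp$); in particular the geodesic $\cc\cc'$ lies in $\Pi$, meets ${\bf B}_s^d[X,r]$ in exactly this arc, and is perpendicular to both bounding great spheres. Thus $\cc\cc'$ is a double normal whose direction is the inner normal of ${\bf B}_s^d[X,r]$ at each of $\cc$ and $\cc'$.

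Next I would exploit that $\cc,\cc'$ are boundary points of the ball body: each lies on some sphere $S_s^{d-1}(\xx,r)$ with $\xx\in X$, and the inner normal at each endpoint lies in the positive hull of the directions from that endpoint to its active centers. In the regular case, where ${\bf B}_s^d[X,r]$ is smooth at both $\cc$ and $\cc'$, the unique active centers $\xx_1$ at $\cc$ and $\xx_2$ at $\cc'$ then lie on the great circle through $\cc,\cc'$, at spherical distance $r$ from $\cc$ and $\cc'$ respectively and on the inner-normal sides. Measuring arclength on that great circle with $\cc$ at $0$, $\cc'$ at $w$, $\xx_1$ at $r$, and $\xx_2$ at $w-r$, one gets ${\rm dist}_s(\xx_1,\xx_2)=|2r-w|$; since $\xx_1,\xx_2\in X$ force ${\rm dist}_s(\xx_1,\xx_2)\le{\rm diam}_s(X)\le r$, this yields $|2r-w|\le r$, hence $w\ge r$.

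The main obstacle is the presence of corners at $\cc$ or $\cc'$, where several balls are active and the active centers need not lie on the geodesic $\cc\cc'$. Here I would keep the angles. For any active center $\xx$ at $\cc$, the constraint ${\rm dist}_s(\cc',\xx)\le r$ (from $\xx\in X\subseteq{\bf B}_s^d[\cc',r]$) and the spherical law of cosines in the triangle with vertex $\cc$ give $\cos\angle(\cc',\cc,\xx)\ge\cot r\,\tan(w/2)$, so all active directions cluster about the inner normal, and symmetrically at $\cc'$. The crux is to combine these cone constraints with ${\rm dist}_s(\xx,\yy)\le r$ for active $\xx$ at $\cc$ and $\yy$ at $\cc'$ to exclude $w<r$. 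I expect it suffices to have one regular endpoint: if, say, $\cc'$ is smooth, then $\xx_2$ sits on the geodesic at distance $r-w$ on the far side of $\cc$, the angle at $\cc$ between $\cc\xx_1$ and $\cc\xx_2$ is at least $\tfrac{\pi}{2}$ for a suitable active $\xx_1$, and the law of cosines gives $\cos{\rm dist}_s(\xx_1,\xx_2)\le\cos r\cos(r-w)<\cos r$ when $w<r$, contradicting ${\rm dist}_s(\xx_1,\xx_2)\le r$. Reducing the remaining two-corner case to this — by a small perturbation of the generating set $X$ that preserves ${\rm diam}_s(X)\le r$ and makes an endpoint regular, followed by passage to the limit using continuity of ${\rm width}_s$ under Hausdorff convergence (cf.\ Proposition~\ref{approximation-general}) — is the step I expect to require the most care.

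Finally, for the equality ${\rm width}_s({\bf \Delta}_d(r))=r$ the lower bound already gives $\ge r$, and I would exhibit a width-$r$ lune. Taking a vertex $V_0$ of the generating regular simplex (which lies in ${\bf \Delta}_d(r)$) together with the point $\cc'$ of $\partial{\bf \Delta}_d(r)$ on the axis through the centroid opposite to $V_0$, the ball centered at $V_0$ is active at $\cc'$, so ${\rm dist}_s(V_0,\cc')=r$; the two supporting hemispheres perpendicular to this axis then bound a lune of width exactly $r$, whence ${\rm width}_s({\bf \Delta}_d(r))=r$.
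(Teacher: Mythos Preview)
Your opening moves match the paper's: locate a width-realizing lune via Sublemma~\ref{Min-Width-Lune}, and use that the two hemisphere centers $\cc,\cc'$ lie in ${\bf B}_s^d[X,r]$ with ${\rm dist}_s(\cc,\cc')=w$. After that the two arguments diverge, and the divergence is exactly where your gap sits.

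You try to find \emph{active} centers $\xx_1,\xx_2\in X$ realizing the tangent spheres at $\cc,\cc'$, and then invoke ${\rm diam}_s(X)\le r$. This forces you into a smooth/corner dichotomy, and your two-corner case is left to a perturbation-and-limit argument that you do not carry out. That step is genuinely delicate: perturbing $X$ changes the ball body, hence the width-realizing lune and the points $\cc,\cc'$ themselves; there is no reason a small perturbation should make the \emph{new} width-realizing contact points smooth (ball polyhedra, for instance, always have corners). So as written the argument is incomplete.

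The paper sidesteps the whole smoothness issue with one clean idea: do not insist that the tangent ball centers lie in $X$. Instead, prove a support lemma (Sublemma~\ref{support-property}) saying that at \emph{any} boundary point with \emph{any} supporting great sphere, the ball of radius $r$ tangent there contains all of $X^r$. The centers $\xx'',\yy''$ of the two tangent balls at $\cc,\cc'$ then lie on the geodesic $\cc\cc'$ by construction (no corner analysis needed), and they belong to the $r$-dual $(X^r)^r$ rather than to $X$. The distance computation gives ${\rm dist}_s(\xx'',\yy'')=2r-w$ exactly as in your smooth case. The diameter constraint is then supplied not by ${\rm diam}_s(X)\le r$ but by ${\rm diam}_s\big((X^r)^r\big)\le r$, which the paper proves (Sublemma~\ref{diameter-bound}) by completing $X$ to a body $Y$ of constant width $r$ and using $(X^r)^r\subseteq (Y^r)^r=Y$. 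In short: replacing $X$ by the larger set $(X^r)^r$ costs nothing on the diameter side but buys you exactly the freedom to place the two centers on the geodesic, eliminating the corner case entirely.

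Your verification of ${\rm width}_s({\bf \Delta}_d(r))=r$ via an explicit lune along the symmetry axis is fine; the paper does not spell this out.
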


\begin{proof}
It will be convenient to use the following notion (resp., notation) from \cite{Be18}.
\begin{definition}\label{r-dual-body}
For a set $X\subseteq \S^d$, $d\geq 2$ and $0<r\leq\frac{\pi}{2}$ let the $r$-dual set $X^r$ of $X$ be defined by $X^r:={\bf B}_s^d[X, r]$. If the spherical interior ${\rm int}_s(X^r)\neq\emptyset$, then we call $X^r$ the $r$-dual body of $X$.
\end{definition}
\noindent $r$-dual sets satisfy some basic identities such as $((X^r)^r)^r=X^r$ and $(X\cup Y)^r=X^r\cap Y^r$, which hold for any $X\subseteq \S^d$ and $Y\subseteq \S^d$. Clearly, also monotonicity holds namely, $X\subseteq Y\subseteq \S^d$ implies $Y^r\subseteq X^r$. Thus, there is a good deal of similarity between $r$-dual sets and spherical polar sets in $\S^d$. For more details see \cite{Be18}. The following statement is a spherical analogue of Lemma 3.1 in \cite{blnp}.

\begin{sublemma}\label{support-property}
Let ${\bf H}$ be a closed supporting hemisphere of the $r$-dual body $X^r$ of $X\subset \S^d$ bounded by the $(d-1)$-dimensional great sphere $H$ in $\S^d$ such that ${\bf H}$ and $H$ support $X^r$ at the boundary point $\xx\in H\cap {\rm bd}(X^r)$, where $d\geq 2$, and $0<r\leq\frac{\pi}{2}$. Then the $d$-dimensional closed ball of radius $r$ of $\S^d$ that is tangent to $H$ at $\xx$ and lies in ${\bf H}$ contains the $r$-dual body $X^r$.
\end{sublemma}
\begin{proof} (The following proof is the spherical analogue of the Euclidean proof of Lemma 3.1 of \cite{blnp}.) Let ${\bf B}_s^d[\cc, r]$ be the $d$-dimensional closed ball of radius $r$ of $\S^d$ that is tangent to $H$ at $\xx$ and lies in ${\bf H}$. Assume that $X^r$ is not contained in ${\bf B}_s^d[\cc, r]$, i.e., let $\yy\in X^r\setminus {\bf B}_s^d[\cc, r]$. Then by taking the intersection of the configuration with the $2$-dimensional spherical plane spanned by $\xx, \yy$, and $\cc$  we see that there is a shorter circular arc of radius $r$ connecting $\xx$ and $\yy$ that is not contained in ${\bf B}_s^d[\cc, r]$ and therefore it is not supported by neither $H$ nor ${\bf H}$. On the other hand, as $\xx,\yy\in X^r$ therefore any such arc must be contained in $X^r$ and must be supported by $H$ as well as ${\bf H}$, a contradiction.
\end{proof}

Now, let $X^r={\bf B}_s^d[X, r]\in\BB^d_{s, {\rm wide}}(r)$ with $d\geq 2$ and $0<r\leq\frac{\pi}{2}$. Sublemma~\ref{Min-Width-Lune} implies that  there exists ${\bf L}\in\LL_s^d$ such that $X^r\subseteq {\bf L}:={\bf B}_s^d[\xx, \frac{\pi}{2}]\cap {\bf B}_s^d[\yy, \frac{\pi}{2}]$ and ${\bf x}'\in S_s^{d-1}(\xx, \frac{\pi}{2})\cap X^r$ is the center of the $(d-1)$-dimensional hemisphere $S_s^{d-1}(\xx, \frac{\pi}{2})\cap{\bf B}_s^d[\yy, \frac{\pi}{2}]$  and ${\bf y}'\in S_s^{d-1}(\yy, \frac{\pi}{2})\cap X^r$ is the center of the $(d-1)$-dimensional hemisphere $S_s^{d-1}(\yy, \frac{\pi}{2})\cap{\bf B}_s^d[\xx, \frac{\pi}{2}]$ satisfying ${\rm width}_s(X^r)={\rm width}_s({\bf L})={\rm dist}_s(\xx',\yy')$. It follows from Sublemma~\ref{support-property} in a straightforward way that there exists ${\bf B}_s^d[\xx'', r]$ (resp., ${\bf B}_s^d[\yy'', r]$) such that
$X^r\subseteq{\bf B}_s^d[\xx'', r]\subseteq {\bf B}_s^d[\xx, \frac{\pi}{2}]$ (resp., $X^r\subseteq{\bf B}_s^d[\yy'', r]\subseteq {\bf B}_s^d[\yy, \frac{\pi}{2}]$) and ${\bf B}_s^d[\xx'', r]$ (resp., ${\bf B}_s^d[\yy'', r]$) is tangent to $S_s^{d-1}(\xx, \frac{\pi}{2})$ (resp., $S_s^{d-1}(\yy, \frac{\pi}{2})$) at $\xx'$ (resp., $\yy'$) with $\xx''\in ({X^r})^{r}$ (resp., $\yy''\in ({X^r})^{r}$). By construction
$$2r-{\rm width}_s(X^r)=2r-{\rm dist}_s(\xx',\yy')={\rm dist}_s(\xx'',\yy'')\leq {\rm diam}_s\left((X^r)^{r}\right)$$
and therefore
\begin{equation}\label{Elso}
2r\leq {\rm width}_s(X^r)+{\rm diam}_s(({X^r})^{r}).
\end{equation}
\begin{sublemma}\label{diameter-bound}
Let $0<r\leq\frac{\pi}{2}$ be given and let $\emptyset\neq X$ be a closed subset of $\S^d$, $d\geq 2$ with ${\rm diam}_s(X)\leq r$. Then 
\begin{equation}\label{Masodik}
{\rm diam}_s\left((X^r)^{r}\right)\leq r.
\end{equation}
\end{sublemma}
\begin{proof}
Recall (\cite{De95} or \cite{La20}) that a closed set $Y\subset \S^d$ is called a complete set if ${\rm diam}_s(Y\cup\{\yy\})>{\rm diam}_s (Y)$ holds for all $\yy\in \S^d\setminus Y$. It is easy to prove the following claim (see Lemma 1 of \cite{La20}): if $Y$ is a complete set with ${\rm diam}(Y)\leq \frac{\pi}{2}$, then $Y= Y^{{\rm diam}_s(Y)}\in \KK_s^d({\rm diam}_s (Y))\subset \KK_s^d$. Furthermore, it is well know (see Theorem 1 of \cite{De95} or Theorem 1 of \cite{La20}) that each set of diameter $\delta \in (0, \pi)$ in $\S^d$ is a subset of a complete set of diameter $\delta$ in $\S^d$. Thus, there exists a complete set $Y\subset \S^d$ such that $X\subseteq Y$ with ${\rm diam}_s(X)\leq {\rm diam}_s(Y)=r$. By the monotonicity of the $r$-dual operation it follows that $(X^r)^r\subseteq (Y^r)^r=Y$ and so, ${\rm diam}_s\left((X^r)^{r}\right)\leq{\rm diam}_s(Y)=r$.
\end{proof}
Thus,  (\ref{Elso}) and (\ref{Masodik}) yield $r\leq {\rm width}_s(X^r)$, finishing the proof of Proposition~\ref{Min-Width}.
\end{proof}

In fact, $c_{BL}(r, d)$ is equal to the minimum of the volumes of convex bodies of constant width $r$ in $\S^d$ as stated in Proposition~\ref{reduction-corollary}. This can be proved as follows. Let $0<r\leq\frac{\pi}{2}$ be given and let $\emptyset\neq X$ be a closed subset of $\S^d$, $d\geq 2$ with ${\rm diam}_s(X)\leq r$. Then the proof of Sublemma~\ref{diameter-bound} shows the existence of a complete set $Y\subset \S^d$ such that $X\subseteq Y$ with ${\rm diam}_s(X)\leq {\rm diam}_s(Y)=r$. As $Y^r=Y$ therefore $Y$ is a convex body of constant width $r$, i.e., $Y\in\KK_s^d(r)$. Moreover, the monotonicity of the $r$-dual operation implies that $Y=Y^r\subseteq X^r$, where $X^r={\bf B}_s^d[X, r]\in\BB^d_{s, {\rm wide}}(r)$. Finally, Blaschke's selection theorem applied to $\KK_s^d(r)$ (\cite{Sch}) yields



\begin{proposition}\label{reduction-corollary} 
Every wide $r$-ball-body ${\bf B}_s^d[X, r]\in\BB^d_{s, {\rm wide}}(r)$ contains a convex body of constant width $r$, i.e., there exists $Y={\bf B}_s^d[Y, r]\in\KK_{s}^d(r)$ such that $Y\subseteq {\bf B}_s^d[X, r]$, where $0<r\leq\frac{\pi}{2}$ and $d\ge 2$. Thus,
$$c_{BL}(r, d)=\min\{{\rm vol}_s({\bf K})\ |\ {\bf K}\in\KK_{s}^d(r)\}$$ 
holds for all $0<r\leq\frac{\pi}{2}$ and $d\ge 2$.
\end{proposition}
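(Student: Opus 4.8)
The plan is to establish the containment statement first, then read off the volume identity as a double inequality, and finally upgrade the infimum to a minimum via a compactness argument. First I would fix a closed set $\emptyset\neq X\subseteq\S^d$ with ${\rm diam}_s(X)\leq r$ and recall, exactly as in the proof of Sublemma~\ref{diameter-bound}, that the completion theorem of \cite{De95},\cite{La20} produces a complete set $Y\subseteq\S^d$ with $X\subseteq Y$ and ${\rm diam}_s(Y)=r$. The characterization of complete sets (Lemma 1 of \cite{La20}) then gives $Y=Y^{{\rm diam}_s(Y)}=Y^r={\bf B}_s^d[Y,r]$, so by Definition~\ref{core-concept} we have $Y\in\KK_s^d(r)$. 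Monotonicity of the $r$-dual operation applied to $X\subseteq Y$ now yields $Y=Y^r\subseteq X^r={\bf B}_s^d[X,r]$, which is precisely the asserted containment of a convex body of constant width $r$ inside the given wide $r$-ball body.

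For the volume identity I would argue by two inequalities. On one hand, every ${\bf K}\in\KK_s^d(r)$ satisfies ${\bf K}={\bf B}_s^d[{\bf K},r]$ with center set $X={\bf K}$ of diameter at most $r$, so $\KK_s^d(r)\subseteq\BB^d_{s,{\rm wide}}(r)$ and hence $c_{BL}(r,d)\leq\inf\{{\rm vol}_s({\bf K})\ |\ {\bf K}\in\KK_s^d(r)\}$. On the other hand, the containment just proved, together with monotonicity of spherical volume, gives ${\rm vol}_s({\bf B}_s^d[X,r])\geq{\rm vol}_s(Y)\geq\inf\{{\rm vol}_s({\bf K})\ |\ {\bf K}\in\KK_s^d(r)\}$ for every wide $r$-ball body; taking the infimum over $\BB^d_{s,{\rm wide}}(r)$ supplies the reverse inequality. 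Combining the two, $c_{BL}(r,d)=\inf\{{\rm vol}_s({\bf K})\ |\ {\bf K}\in\KK_s^d(r)\}$.

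It remains to see that this infimum is attained, and this is the only step where I expect genuine work. I would take a minimizing sequence ${\bf K}_n\in\KK_s^d(r)$ and apply Blaschke's selection theorem (\cite{Sch}) to pass to a Hausdorff-convergent subsequence ${\bf K}_n\to{\bf K}_\infty$. The main obstacle is verifying that the limit ${\bf K}_\infty$ still lies in $\KK_s^d(r)$ and that its volume equals the limiting value. For the first point I would show that $\KK_s^d(r)$ is closed under Hausdorff limits: since ${\rm diam}_s$ is continuous on compact sets we get ${\rm diam}_s({\bf K}_\infty)=r$, and the defining relation of constant width is preserved in the limit because completeness of diameter $r$ (equivalently, the identity ${\bf K}={\bf K}^r$) passes to Hausdorff limits, using that the $r$-dual operation is continuous enough for ${\bf K}_n={\bf K}_n^r$ to force ${\bf K}_\infty={\bf K}_\infty^r$; hence ${\bf K}_\infty\in\KK_s^d(r)$. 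For the second point I would invoke continuity of spherical volume on convex bodies under Hausdorff convergence, giving ${\rm vol}_s({\bf K}_\infty)=\lim_n{\rm vol}_s({\bf K}_n)=\inf\{{\rm vol}_s({\bf K})\ |\ {\bf K}\in\KK_s^d(r)\}$. Thus the infimum is a minimum, which completes the proof.
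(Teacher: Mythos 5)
Your argument is essentially identical to the paper's proof: the paper also obtains $Y$ by completing $X$ to a complete set of diameter $r$ (as in the proof of Sublemma~\ref{diameter-bound}), uses $Y=Y^r$ and the monotonicity of the $r$-dual operation to get $Y\subseteq X^r$, and then invokes Blaschke's selection theorem applied to $\KK_s^d(r)$ to turn the infimum into a minimum. You merely spell out the two-sided inequality and the compactness details that the paper leaves implicit, so there is nothing to add.
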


In connection with Proposition~\ref{reduction-corollary} it is natural to look for the spherical analogue of Schramm's lower bound (\cite{Sc88}) for the volume of convex bodies of constant width in $\E^d$. This has been done by Schramm (\cite{Schr}) for
$c_{BL}\left(\frac{\pi}{2}, d\right)=\min\left\{{\rm vol}({\bf K})\ |\ {\bf K}\in\KK_{s}^d\left(\frac{\pi}{2}\right)\right\}$ as follows.
\begin{remark}\label{special lower bound of Schramm}
Proposition 9 of \cite{Schr} implies that
$$c_{BL}\left(\frac{\pi}{2}, d\right)
\geq\sqrt{ \frac{8^d}{2\pi (d+1)(d+4)^d} }{\rm vol}_s\left({\Delta}_d\left(\frac{\pi}{2}\right)\right),$$ where ${\rm vol}_s(\S^d)=(d+1)\omega_{d+1}=\frac{(d+1)\pi^{\frac{d+1}{2}}}{\Gamma(\frac{d+3}{2})}$, ${\rm vol}_s\left({\Delta}_d\left(\frac{\pi}{2}\right)\right)=\frac{(d+1)\omega_{d+1}}{2^{d+1}}$ and $d\geq 3$.
\end{remark}

It seems reasonable to hope for the following strengthening of the estimate of Remark~\ref{special lower bound of Schramm} (resp., of Conjecture 1.6 from \cite{Be12}). 

\begin{con}\label{higher-dimensional-BL}
$c_{BL}(\frac{\pi}{2}, d)={\rm vol}_s({\Delta}_d(\frac{\pi}{2}))$, i.e., if
 ${\bf K}\in\KK_s^d(\frac{\pi}{2})$, then ${\rm vol}_s({\bf K})\geq{\rm vol}_s({\Delta}_d(\frac{\pi}{2}))$ for all $d\ge 3$.
\end{con}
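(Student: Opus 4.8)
The plan is to start from Proposition~\ref{reduction-corollary}, which already reduces the conjecture to the assertion that the regular Reuleaux simplex ${\bf \Delta}_d(\frac{\pi}{2})$ minimizes ${\rm vol}_s$ over the family $\KK_s^d(\frac{\pi}{2})$ of convex bodies of constant width $\frac{\pi}{2}$. First I would set up the extremal problem: Blaschke's selection theorem guarantees that a volume-minimizing body ${\bf K}_\ast\in\KK_s^d(\frac{\pi}{2})$ exists, so the entire difficulty lies in identifying ${\bf K}_\ast$. The feature I would try to exploit is the self-duality available at width exactly $\frac{\pi}{2}$: since ${\bf B}_s^d[\xx,\frac{\pi}{2}]$ is the closed hemisphere $\{\yy\ |\ \langle\xx,\yy\rangle\ge 0\}\cap\S^d$, a body of constant width $\frac{\pi}{2}$ satisfies ${\bf K}={\bf K}^{\frac{\pi}{2}}$, i.e., it is fixed by the $\frac{\pi}{2}$-dual (polar) operation of Definition~\ref{r-dual-body}. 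This rigidity is precisely what distinguishes the case $w=\frac{\pi}{2}$ from general $r$ and is the reason the conjecture is stated only for it.

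With the reduction in hand there are two natural routes. The first is to imitate the two-dimensional argument behind Theorem~\ref{BL-generalized}: approximate ${\bf K}_\ast$ by wide $\frac{\pi}{2}$-ball polyhedra via Proposition~\ref{approximation-general}, and then design volume-decreasing cuts and symmetrizations that drive an arbitrary self-dual ball polyhedron toward the regular Reuleaux simplex. The second is variational: write down the first-variation conditions for ${\rm vol}_s$ on the constrained manifold $\{{\bf K}={\bf K}^{\frac{\pi}{2}}\}$ and argue that the only critical configuration compatible with the self-dual constraint and the observed vertex--facet incidence structure is the regular one. In either approach I would use the inradius and width bounds already proved (Propositions~\ref{Min-Inradius} and~\ref{Min-Width}) to confine the minimizer and to rule out degenerate limits.

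The hard part---and the reason this remains a conjecture---is that neither route is known to close up for $d\ge 3$, which is the very obstruction that keeps the Euclidean Blaschke--Lebesgue problem open in every dimension above two. Concretely, the symmetrizations that succeed in $\S^2$ have no dimension-raising analogue that is simultaneously volume-monotone and width-preserving, because a cut lowering volume typically destroys the constant-width (self-dual) constraint, and restoring the constraint may raise the volume again. Moreover, one must first settle whether ${\bf \Delta}_d(\frac{\pi}{2})$ is genuinely extremal: in $\E^3$ the Reuleaux tetrahedron is not of constant width and the conjectured minimizer is a Meissner-type body of strictly smaller volume, so a crucial preliminary step is to verify that no analogous ``rounding'' of ${\bf \Delta}_d(\frac{\pi}{2})$ yields a constant-width body of smaller volume in $\S^d$. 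Here the self-duality forced by $w=\frac{\pi}{2}$ is exactly what one would hope rules such rounding out, but I do not see at present how to make this rigorous.

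A more modest yet realizable goal would be to improve the quantitative estimate of Remark~\ref{special lower bound of Schramm} rather than to prove equality outright: Schramm's bound is exponentially far from ${\rm vol}_s({\bf \Delta}_d(\frac{\pi}{2}))$, and sharpening his covering/illumination argument using the self-dual structure of bodies in $\KK_s^d(\frac{\pi}{2})$ might at least narrow the gap, even if the exact identity $c_{BL}(\frac{\pi}{2},d)={\rm vol}_s({\bf \Delta}_d(\frac{\pi}{2}))$ stays out of reach.
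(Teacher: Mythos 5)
The statement you were asked to prove is Conjecture~\ref{higher-dimensional-BL}: the paper itself offers no proof of it, only the reduction of Proposition~\ref{reduction-corollary} and the far-from-sharp lower bound of Remark~\ref{special lower bound of Schramm}. Your proposal, quite correctly, does not prove it either --- it is a survey of plausible attack routes together with an honest account of why each one stalls, and that account matches the actual state of the problem. So the ``gap'' here is the entire argument, but that is inherent to the statement being open, and you deserve credit for recognizing this rather than manufacturing a fallacious proof. Your framing is consistent with the paper's: the existence of a minimizer via Blaschke selection, the reduction to $\KK_s^d(\frac{\pi}{2})$, and the observation that the two-dimensional cut-and-symmetrize scheme of Theorem~\ref{BL-generalized} has no known volume-monotone, width-preserving analogue in higher dimensions are all the right points to make.

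One item you flag as needing verification can in fact be settled immediately, and doing so would strengthen your write-up: whether ${\bf \Delta}_d(\frac{\pi}{2})$ is genuinely of constant width $\frac{\pi}{2}$, i.e., whether the ``Meissner obstruction'' from $\E^3$ even arises here. Taking the vertices of the regular spherical $d$-simplex of edge length $\frac{\pi}{2}$ to be an orthonormal basis $\mathbf{e}_1,\dots,\mathbf{e}_{d+1}$ of $\E^{d+1}$, one has ${\bf \Delta}_d(\frac{\pi}{2})=\bigcap_{i}\{\mathbf{y}\in\S^d\ |\ \langle\mathbf{e}_i,\mathbf{y}\rangle\geq 0\}$, the closed positive orthant on $\S^d$, and this set is its own $\frac{\pi}{2}$-dual because the positive orthant is a self-dual cone. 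Hence ${\bf \Delta}_d(\frac{\pi}{2})={\bf B}_s^d[{\bf \Delta}_d(\frac{\pi}{2}),\frac{\pi}{2}]\in\KK_s^d(\frac{\pi}{2})$ by Definition~\ref{core-concept}, so the conjectured extremal body is at least an admissible competitor and cannot be ``rounded away'' as the Reuleaux tetrahedron is in $\E^3$. That confirms the conjecture is well-posed, but of course it does not decide it; your suggestion to instead sharpen Schramm's bound using the self-duality of bodies in $\KK_s^d(\frac{\pi}{2})$ is a reasonable fallback, and is the kind of partial progress the paper itself implicitly invites.
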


\section{Proof of Theorem~\ref{BL-generalized}}

Let $0<r\leq\frac{\pi}{2}$ and ${\bf D}\in \BB^2_{s, {\rm wide}}(r)$. Our goal is to show that ${\rm area}_s({\bf D})\geq{\rm area}_s({\bf \Delta}_2(r))$. Let ${\bf C}_{\rm in}$ be the inscribed disk of ${\bf D}$ with center $\cc$ having radius $R_{\rm in}$. We may assume that $2R_{\rm in}< r$. Namely, if $r\leq2R_{\rm in}$, then ${\bf D}$ contains a disk of diameter $r$ and so, it follows via the spherical isodiametric inequality (\cite{BoSag}) that ${\rm area}_s({\bf D})\geq{\rm area}_s({\bf \Delta}_2(r))$. 

Next, one of the following two cases must occur: either the boundaries of ${\bf D}$ and ${\bf C}_{\rm in}$ have two points in common such that the shorter great circular arc connecting them is a diameter of ${\bf C}_{\rm in}$ or the boundaries of ${\bf D}$ and ${\bf C}_{\rm in}$ have three points in common such that $\cc$ is in the interior of the triangle that is the spherical convex hull of these three points. In the first case, Sublemma~\ref{support-property} implies that ${\bf D}$ contains a disk of diameter $r$ and so, as above we get that ${\rm area}_s({\bf D})\geq{\rm area}_s({\bf \Delta}_2(r))$. 

\begin{figure}[ht]
\begin{center}
\includegraphics[width=0.6\textwidth]{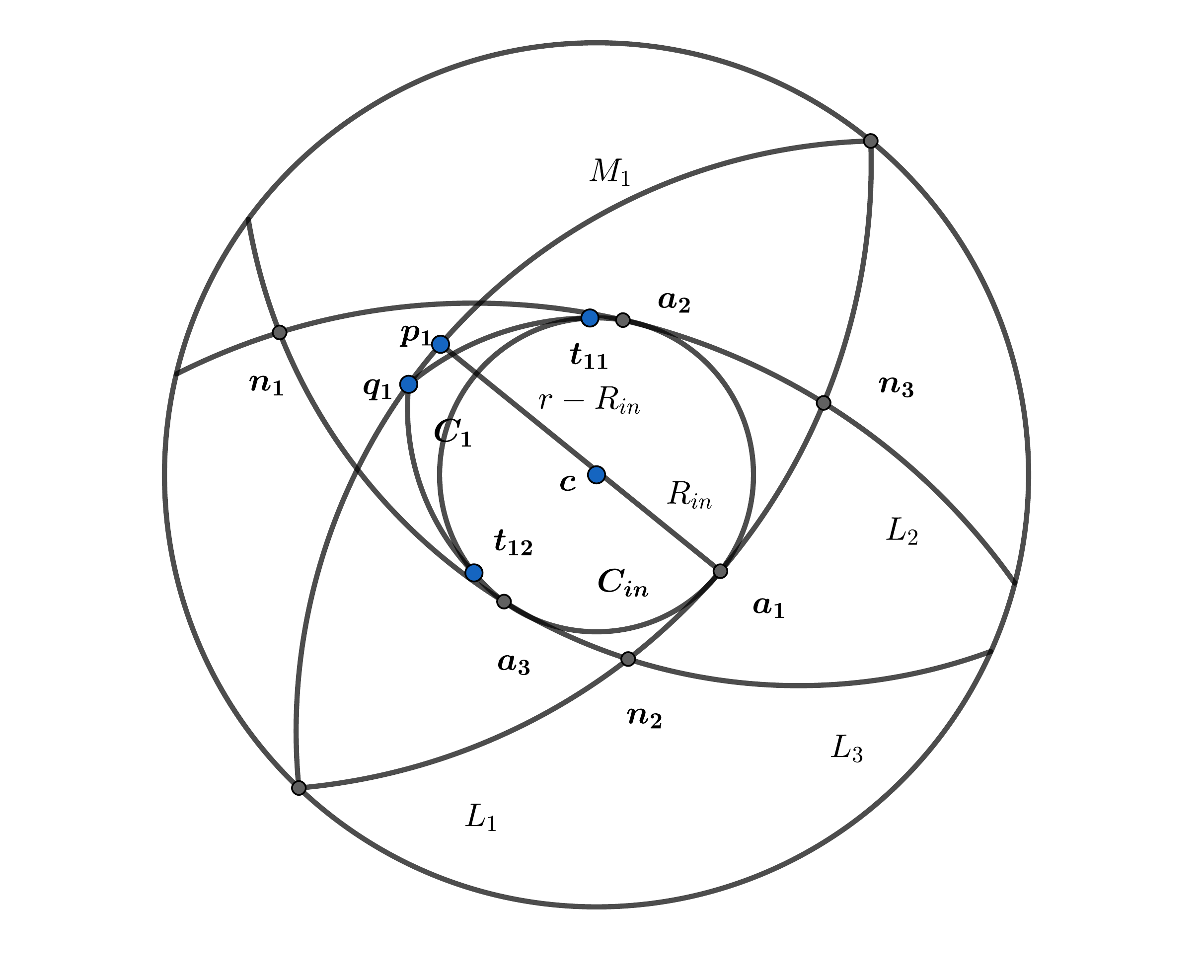}
\caption{Cap ${\bf C}_1$ of the cap-domain ${\bf C}:={\bf C}_1\cup{\bf C}_2\cup{\bf C}_3\cup {\bf C}_{\rm in}$ in the hemisphere of $\S^2$ with center $\cc$.}
\label{Figure1AAAA.pdf}
\end{center}
\end{figure}

In the second case, let the three selected points in common of the boundaries of ${\bf D}$ and ${\bf C}_{\rm in}$ be $\aa_1$, $\aa_2$, and $\aa_3$ and let the supporting great circles to ${\bf C}_{\rm in}$ at these points be $L_1$, $L_2$, and $L_3$ respectively (Figure~\ref{Figure1AAAA.pdf}). Note that $L_1$, $L_2$, and $L_3$ are also supporting great circles to ${\bf D}$ and thus, ${\bf D}$ lies in one of the spherical triangles determined by $L_1$, $L_2$, and $L_3$. Let us label this spherical triangle by $\triangle{\bf n}_1{\bf n}_2{\bf n}_3$ having the vertices ${\bf n}_1$, ${\bf n}_2$, and ${\bf n}_3$ such that that ${\bf n}_1$ is not on $L_1$ (i.e., ${\bf n}_1$ is ``opposite" to $L_1$ ), ${\bf n}_2$ is not on $L_2$, and ${\bf n}_3$ is not on $L_3$. Now, let $\pp_1$ be the point on the same side of $L_1$ as ${\bf D}$ such that $\pp_1\aa_1$ is of length $r$ and is perpendicular to $L_1$ at $\aa_1$. Let $M_1$ be the great circle perpendicular to $\pp_1\aa_1$ at $\pp_1$. Note that the angle between $L_1$ and $M_1$ is $r$. By Proposition~\ref{Min-Width}, $M_1$ must contain a point of ${\bf D}$. Let this point be $\qq_1$. Since $0<r\leq\frac{\pi}{2}$ we have that
\begin{equation}\label{negyedik} 
{\rm dist}_s(\qq_1,\cc)\geq{\rm dist}_s(\pp_1,\cc)=r-R_{\rm in}>R_{\rm in}.
\end{equation} 

Let $\tt_{11}$ and $\tt_{12}$ be the two points in common of the boundary of ${\bf C}_{\rm in}$ with the two circles of radius $r$ passing through $\qq_1$ that are tangent to ${\bf C}_{\rm in}$ and whose disks of radius $r$ contain ${\bf C}_{\rm in}$. Here the shorter circular arc of radius $r$ connecting $\qq_1$ and $\tt_{11}$ (resp., $\tt_{12}$) and sitting on the corresponding circle of radius $r$ just introduced, is labeled by $(\qq_1\tt_{11})_r$ (resp., $(\qq_1\tt_{12})_r$). The circular arcs $(\qq_1\tt_{11})_r$ and $(\qq_1\tt_{12})_r$ have equal lengths moreover, the cap ${\bf C}_1$ bounded by $(\qq_1\tt_{11})_r$ and $(\qq_1\tt_{12})_r$ and the shorter circular arc of radius $R_{\rm in}$ connecting $\tt_{11}$ and $\tt_{12}$ on the boundary of ${\bf C}_{\rm in}$ lies in ${\bf D}$ and therefore it lies also in the spherical triangle $\triangle{\bf n}_1\aa_2\aa_3\subset \triangle{\bf n}_1{\bf n}_2{\bf n}_3$. (Here we have used the property of ${\bf D}$ that if we choose two points in ${\bf D}$, then any shorter circular arc of radius $r'$ with $r\leq r'\leq\frac{\pi}{2}$ connecting the two points lies in ${\bf D}$.) We can perform the same procedure for the points $\aa_2$ and $\aa_3$, producing the caps ${\bf C}_2$ and ${\bf C}_3$. By construction the caps ${\bf C}_1$, ${\bf C}_2$, and ${\bf C}_3$ are non-overlapping and the cap-domain ${\bf C}:={\bf C}_1\cup{\bf C}_2\cup{\bf C}_3\cup {\bf C}_{\rm in}$ is a subset of ${\bf D}$ and therefore
\begin{equation}\label{otodik}
{\rm area}_s({\bf D})\geq{\rm area}_s({\bf C}).
\end{equation}

\begin{figure}[h]
\begin{center}
\includegraphics[width=0.6\textwidth]{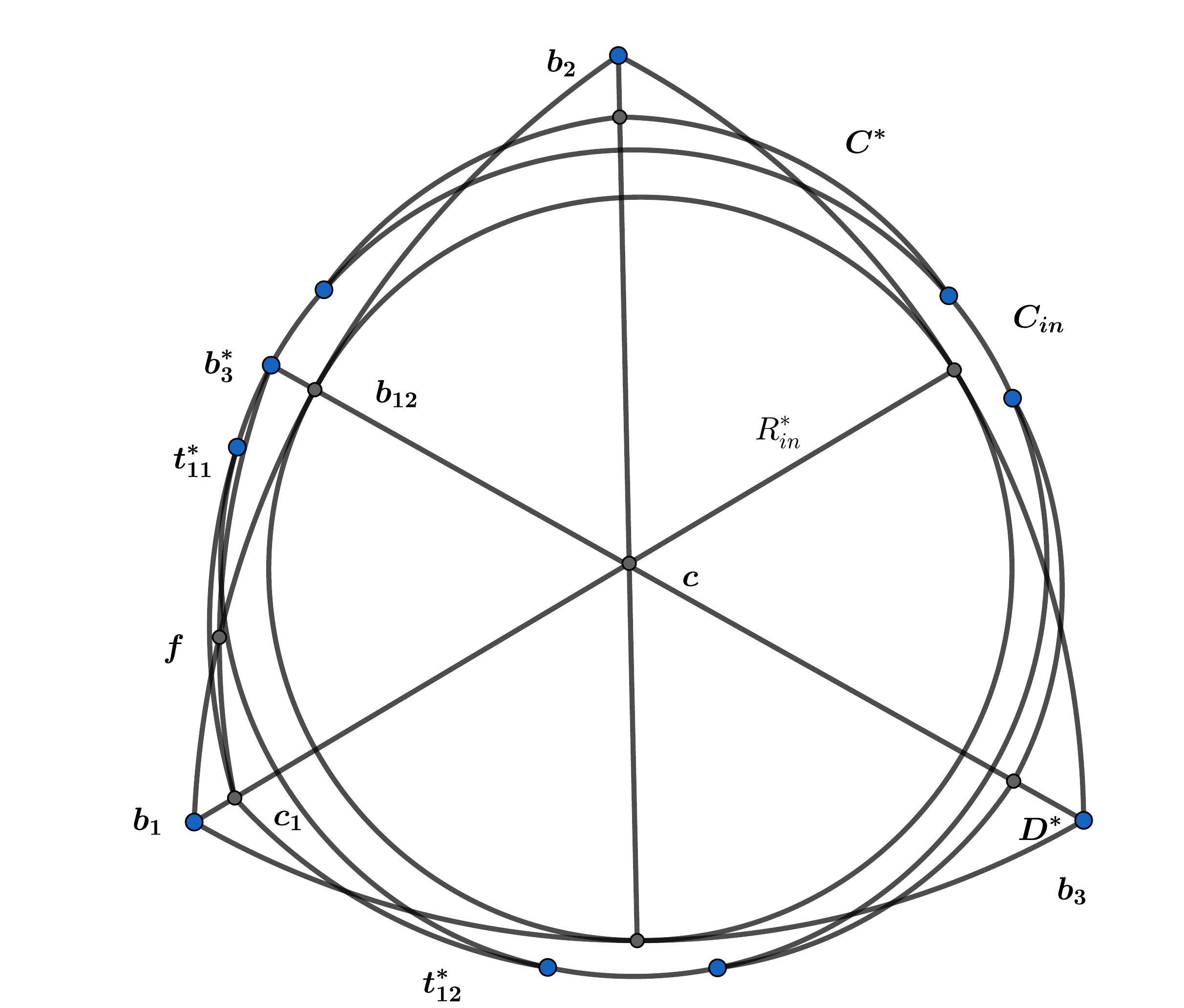}
\caption{The cap-domain ${\bf C}^*:={\bf C}_1^*\cup{\bf C}_2^*\cup{\bf C}_3^*\cup {\bf C}_{\rm in}$ compared to ${\bf D}^*$ via dissection and symmetry.}
\label{Figure2AAA.pdf}
\end{center}
\end{figure}

Let ${\bf D}^*:={\bf \Delta}_2(r)$ with vertices $\bb_1$, $\bb_2$, and $\bb_3$ such that its center is $\cc$ (Figure~\ref{Figure2AAA.pdf}). If $R_{\rm in}^*$ denotes the inradius of ${\bf D}^*$, then ${\rm dist}_s(\bb_1,\cc)={\rm dist}_s(\bb_2,\cc)={\rm dist}_s(\bb_3,\cc)=r-R_{\rm in}^*$. Clearly, Proposition~\ref{Min-Inradius} yields that $ r-R_{\rm in}^*\geq r-R_{\rm in}$. Thus, let $\cc_i$ be the point on the great circular arc $\bb_i\cc$ such that ${\rm dist}_s(\cc_i,\cc)=r-R_{\rm in}$, $1\leq i\leq 3$. Let $\tt_{11}^*$ and $\tt_{12}^*$ be the two points in common of the boundary of ${\bf C}_{\rm in}$ with the two circles of radius $r$ passing through $\cc_1$ that are tangent to ${\bf C}_{\rm in}$ and whose disks of radius $r$ contain ${\bf C}_{\rm in}$. Here the shorter circular arc of radius $r$ connecting $\cc_1$ and $\tt_{11}^*$ (resp., $\tt_{12}^*$) and sitting on the corresponding circle of radius $r$ just introduced, is labeled by $(\cc_1\tt_{11}^*)_r$ (resp., $(\cc_1\tt_{12}^*)_r$). The circular arcs $(\cc_1\tt_{11}^*)_r$ and $(\cc_1\tt_{12}^*)_r$ have equal lengths. Moreover, let the cap ${\bf C}_1^*$ be the domain bounded by $(\cc_1\tt_{11}^*)_r$ and $(\cc_1\tt_{12}^*)_r$ and the shorter circular arc of radius $R_{\rm in}$ connecting $\tt_{11}^*$ and $\tt_{12}^*$ on the boundary of ${\bf C}_{\rm in}$. From (\ref{negyedik}) it follows that ${\rm area}_s({\bf C}_1)\geq{\rm area}_s({\bf C}_1^*)$. Similarly, we can define the caps ${\bf C}_2^*$ and ${\bf C}_3^*$ with vertices $\cc_2$ and $\cc_3$ for which ${\rm area}_s({\bf C}_2)\geq{\rm area}_s({\bf C}_2^*)$ and ${\rm area}_s({\bf C}_3)\geq{\rm area}_s({\bf C}_3^*)$. By construction the caps ${\bf C}_1^*$, ${\bf C}_2^*$, and ${\bf C}_3^*$ are non-overlapping and therefore the cap-domain ${\bf C}^*:={\bf C}_1^*\cup{\bf C}_2^*\cup{\bf C}_3^*\cup {\bf C}_{\rm in}$ satisfies the inequality 
\begin{equation}\label{hatodik}
{\rm area}_s({\bf C})\geq{\rm area}_s({\bf C}^*).
\end{equation}
Based on (\ref{otodik}) and (\ref{hatodik}) we finish the proof of Theorem~\ref{BL-generalized} by showing the inequality 
\begin{equation}\label{final-inequality}
{\rm area}_s({\bf C}^*)\geq {\rm area}_s({\bf D}^*).
\end{equation}

Let $\bb_{12}$ be the midpoint of $(\bb_1\bb_2)_r$, which is the shorter circular arc of radius $r$ connecting $\bb_1$ and $\bb_2$ on the boundary of ${\bf D}^*$ (Figure~\ref{Figure2AAA.pdf}). Moreover, let $\bb_3^*:=\widehat{\cc\bb_3}\cap (S_s^1(\cc, R_{\rm in})\setminus \cc\bb_3)$. From this it follows that ${\rm dist}_s(\bb_1,\cc_1)={\rm dist}_s(\bb_{12},\bb_3^*)=R_{\rm in}-R_{\rm in}^*$. Let ${\bf f}:=(\bb_1\bb_{12})_r\cap(\cc_1\bb_3^*)_r$, where $(\bb_1\bb_{12})_r$ (resp., $(\cc_1\bb_3^*)_r$) is the shorter circular arc of radius $r$ connecting $\bb_1$ and $\bb_{12}$ (resp., $\cc_1$ and $\bb_3^*$) such that $(\bb_1\bb_{12})_r$ lies on the boundary of ${\bf D}^*$ (resp., the disk ${\bf B}_s^2[\cc', r]$ containing $(\cc_1\bb_3^*)_r$ on its boundary contains $\bb_3$ (resp., $\bb_1$) in its interior (resp., exterior)). We note that by construction 
\begin{equation}\label{hetedik}
(\cc_1\bb_3^*)_r\subset {\bf C}^* .
\end{equation}

\begin{sublemma}\label{Cauchy-applied}
Let ${\bf u}:=\widehat{\bb_3\cc'}\cap(S_s^1(\bb_3, r)\setminus {\bf B}_s^2[\cc', r])$ and ${\bf v}:=\widehat{\bb_3\cc'}\cap(S_s^1(\cc', r)\setminus {\bf B}_s^2[\bb_3, r])$ (Figure~\ref{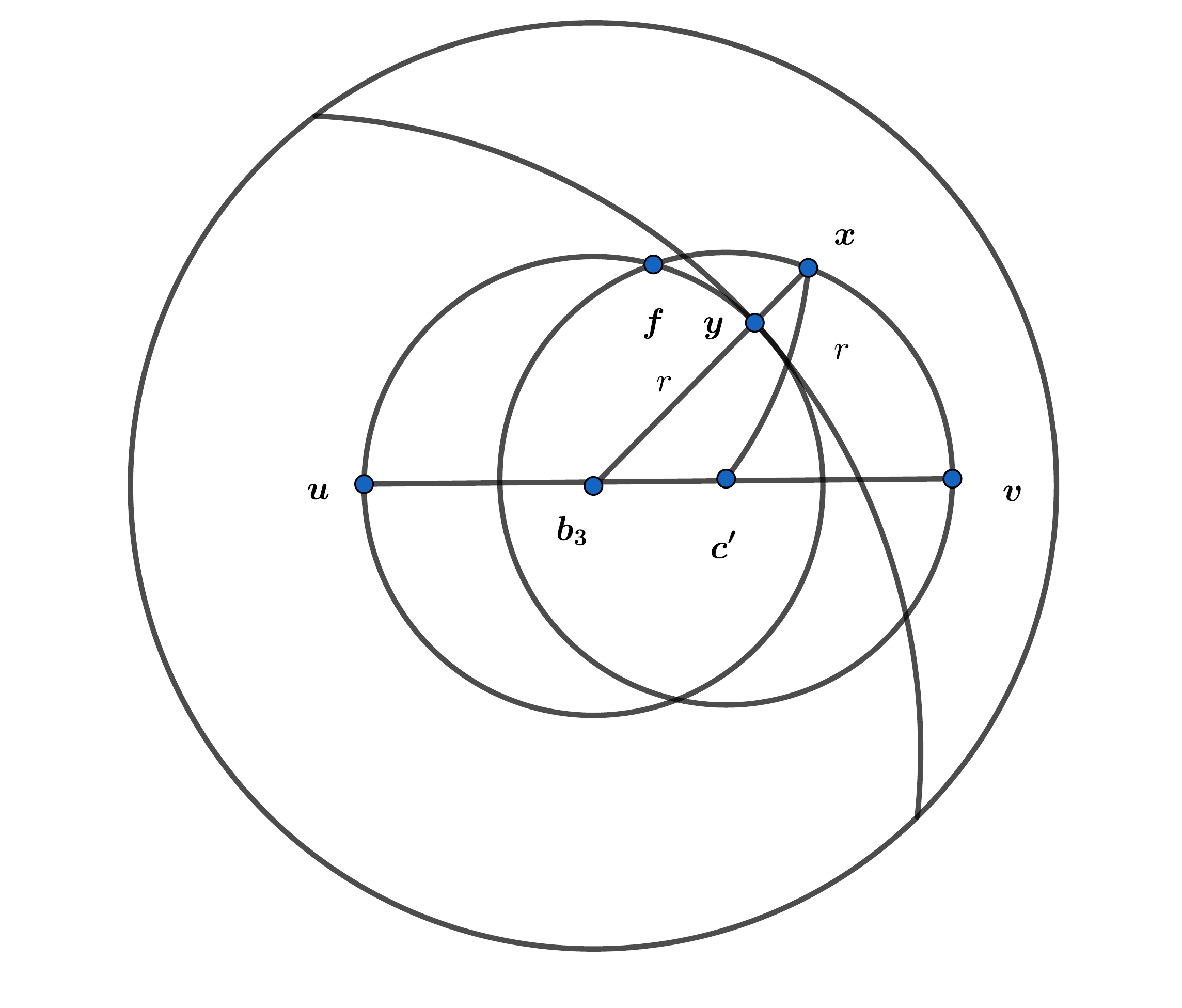}). Furthermore, let $({\bf f}{\bf v})_r$ be the shorter circular arc of $S_s^1(\cc', r)$ connecting ${\bf f}$ and ${\bf v}$ and let $\xx\in({\bf f}{\bf v})_r$ be a point moving from ${\bf f}$ to ${\bf v}$.  Then the point of ${\bf B}_s^2[\bb_3, r])$ closest to $\xx$ is $\yy:=\bb_3\xx\cap S_s^1(\bb_3, r)$ and ${\rm dist}_s (\xx,\yy)$ is a strictly increasing function of the length of $({\bf f}\xx)_r$, where $({\bf f}\xx)_r$ denotes the shorter circular arc of $S_s^1(\cc', r)$ connecting ${\bf f}$ and $\xx$.
\end{sublemma}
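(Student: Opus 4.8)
The plan is to prove the two assertions in turn: first reduce the distance from $\xx$ to the ball to the distance from $\xx$ to its center $\bb_3$, and then establish the monotonicity through the spherical law of cosines (equivalently, Cauchy's arm lemma for a two-link arm).

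\emph{The nearest-point claim.} Fix $\xx\in({\bf f}{\bf v})_r$; we will verify below that ${\rm dist}_s(\xx,\bb_3)\geq r$, so that $\xx\notin\inte{\bf B}_s^2[\bb_3,r]$ and the point $\yy=\bb_3\xx\cap S_s^1(\bb_3,r)$ lies on the geodesic $\bb_3\xx$ at spherical distance $r$ from $\bb_3$. For any competitor $\zz\in{\bf B}_s^2[\bb_3,r]$ the spherical triangle inequality gives ${\rm dist}_s(\xx,\zz)\geq{\rm dist}_s(\xx,\bb_3)-{\rm dist}_s(\bb_3,\zz)\geq{\rm dist}_s(\xx,\bb_3)-r={\rm dist}_s(\xx,\yy)$, the last equality holding because $\yy$ splits $\bb_3\xx$ at distance $r$ from $\bb_3$. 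Hence $\yy$ is the closest point of ${\bf B}_s^2[\bb_3,r]$ to $\xx$ and, in particular, ${\rm dist}_s(\xx,\yy)={\rm dist}_s(\xx,\bb_3)-r$.

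\emph{The monotonicity.} By this identity it suffices to show that ${\rm dist}_s(\xx,\bb_3)$ strictly increases as $\xx$ moves from ${\bf f}$ to ${\bf v}$. Set $d:={\rm dist}_s(\bb_3,\cc')$; since $\bb_3$ is an interior point of ${\bf B}_s^2[\cc',r]$ distinct from $\cc'$ we have $0<d<r\leq\frac{\pi}{2}$. I would parametrize $\xx\in S_s^1(\cc',r)$ by the angle $\theta:=\angle\bb_3\cc'\xx$ between the geodesics $\cc'\bb_3$ and $\cc'\xx$. The spherical law of cosines for the triangle $\bb_3\cc'\xx$ (with ${\rm dist}_s(\cc',\xx)=r$) gives $\cos{\rm dist}_s(\xx,\bb_3)=\cos r\cos d+\sin r\sin d\cos\theta$. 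As $\sin r\sin d>0$, the right-hand side is strictly decreasing in $\theta\in[0,\pi]$, and since ${\rm dist}_s(\xx,\bb_3)\in[0,\pi)$ and $\cos$ is strictly decreasing there, ${\rm dist}_s(\xx,\bb_3)$ is a strictly increasing function of $\theta$. This is precisely Cauchy's arm lemma for the two-link arm $\bb_3\cc'\xx$: opening the angle at $\cc'$ pushes its endpoints apart.

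\emph{From $\theta$ to arc length.} It remains to see that the length of $({\bf f}\xx)_r$ is a strictly increasing function of $\theta$, and this is the step that needs the configuration read off with care. On $S_s^1(\cc',r)$ the arc length from ${\bf f}$ to $\xx$ equals $\sin r$ times the angle at $\cc'$ between the directions of $\cc'{\bf f}$ and $\cc'\xx$. Now ${\bf v}$ lies on $\widehat{\bb_3\cc'}$ on the far side of $\cc'$, so its direction from $\cc'$ is antipodal to that of $\cc'\bb_3$, i.e. ${\bf v}$ corresponds to $\theta=\pi$; and ${\bf f}\in S_s^1(\bb_3,r)\cap S_s^1(\cc',r)$ (because $(\bb_1\bb_{12})_r$ lies on the boundary arc of ${\bf D}^*$ centered at $\bb_3$) corresponds to some $\theta_{\bf f}\in(0,\pi)$ with ${\rm dist}_s({\bf f},\bb_3)=r$. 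Since the two endpoints are separated on the circle by the angle $\pi-\theta_{\bf f}<\pi$, traversing the shorter arc $({\bf f}{\bf v})_r$ rotates the direction of $\cc'\xx$ monotonically from that of $\cc'{\bf f}$ to the antipode of $\cc'\bb_3$, so $\theta$ increases monotonically from $\theta_{\bf f}$ to $\pi$ and the length of $({\bf f}\xx)_r$ equals $\sin r\,(\theta-\theta_{\bf f})$. Chaining the three monotonicities yields the claim; in passing this also shows ${\rm dist}_s(\xx,\bb_3)\geq{\rm dist}_s({\bf f},\bb_3)=r$ along the whole arc, which retroactively justifies the hypothesis used in the nearest-point claim. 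I expect the only genuine obstacle to be this last verification that the rotation is monotone — that ${\bf f}$ and ${\bf v}$ lie on the same side of $\widehat{\bb_3\cc'}$ so that $\theta$ does not first decrease — everything else reducing to the triangle inequality and a single application of the law of cosines.
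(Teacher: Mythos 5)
Your argument is correct, and it follows the same skeleton as the paper's proof---identify the nearest point of ${\bf B}_s^2[\bb_3,r]$ to $\xx$ as the radial point $\yy$ on $\bb_3\xx$, then reduce everything to the monotonicity of ${\rm dist}_s(\bb_3,\xx)$ in the angle $\angle\bb_3\cc'\xx$---but you justify both steps by more elementary, self-contained means. For the nearest-point claim the paper argues via an orthogonality (first-variation) condition at the minimizer; your triangle-inequality argument ${\rm dist}_s(\xx,{\bf z})\geq {\rm dist}_s(\xx,\bb_3)-{\rm dist}_s(\bb_3,{\bf z})\geq {\rm dist}_s(\xx,\bb_3)-r={\rm dist}_s(\xx,\yy)$ is cleaner and simultaneously delivers the identity ${\rm dist}_s(\xx,\yy)={\rm dist}_s(\bb_3,\xx)-r$ that both proofs need. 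For the monotonicity the paper cites the spherical Cauchy Arm Lemma as a black box; since the ``arm'' $\bb_3\cc'\xx$ has only two links, you rightly observe that the lemma collapses to a single application of the spherical law of cosines, $\cos{\rm dist}_s(\bb_3,\xx)=\cos r\cos d+\sin r\sin d\cos\theta$ with $d={\rm dist}_s(\bb_3,\cc')\in(0,r)$, whose right-hand side is strictly decreasing in $\theta$. You also make explicit two points the paper leaves implicit: that $\theta$ increases monotonically from $\theta_{\bf f}$ to $\pi$ along the shorter arc $({\bf f}{\bf v})_r$ (because ${\bf v}$ sits at $\theta=\pi$ and the shorter arc subtends the angle $\pi-\theta_{\bf f}<\pi$), and that ${\rm dist}_s(\bb_3,\xx)\geq{\rm dist}_s(\bb_3,{\bf f})=r$ throughout, so that $\yy$ is well defined. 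The one hypothesis you need and correctly flag is $\cc'\neq\bb_3$ (equivalently $\sin d>0$), without which strict monotonicity fails; this is harmless, since $\cc'=\bb_3$ occurs only in the degenerate situation $R_{\rm in}=R_{\rm in}^*$, where ${\bf C}^*={\bf D}^*$ and the sublemma is not needed.
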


\begin{figure}[h]
\begin{center}
\includegraphics[width=0.5\textwidth]{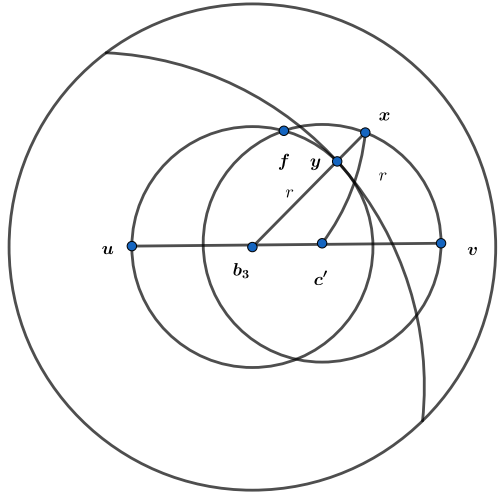}
\caption{Cauchy's Arm Lemma applied to $\triangle\bb_3\cc'\xx$ in $\S^2$}
\label{Figure3AA.pdf}
\end{center}
\end{figure}

\begin{proof}
Clearly, the point of ${\bf B}_s^2[\bb_3, r]$ closest to $\xx$ must have the property that the great circle passing through it and tangent to ${\bf B}_s^2[\bb_3, r]$ is orthogonal to the great circular arc connecting that point to $\xx$. It follows that the closest point is $\yy=\bb_3\xx\cap S_s^1(\bb_3, r)$. On the other hand, notice that as $\xx\in({\bf f}{\bf v})_r$ moves from ${\bf f}$ to ${\bf v}$ the angle $\angle\bb_3\cc'\xx$ at the vertex $\cc'$ of the spherical triangle $\triangle\bb_3\cc'\xx$ (bounded by the great circular arcs $\bb_3\cc', \cc'\xx$ and $\bb_3\xx$) strictly increases and so,  the spherical version of Cauchy's Arm Lemma (see \cite{Cauchy} or \cite{Cro}, p. 228) implies that ${\rm dist}_s (\bb_3,\xx)$
strictly increases and therefore also ${\rm dist}_s (\xx,\yy)={\rm dist}_s (\bb_3,\xx)-r$ strictly increases.
\end{proof}

Next, we note that the spherical distace of $\bb_3^*$ (resp., $\bb_1$) to ${\bf B}_s^2[\bb_3, r]$ (resp., ${\bf B}_s^2[\cc', r]$) is equal to ${\rm dist}_s(\bb_3^*, \bb_{12})=R_{\rm in}-R_{\rm in}^*$ (resp., is at most ${\rm dist}_s(\bb_1,\cc_1)=R_{\rm in}-R_{\rm in}^*$). Hence, Sublemma~\ref{Cauchy-applied} implies that the length of $(\bb_1{\bf f})_r$ (resp., $(\cc_1{\bf f})_r$) is at most as large as the length of $(\bb_3^*{\bf f})_r$ (resp., $(\bb_{12}{\bf f})_r$), where $(\bb_1{\bf f})_r$, $(\cc_1{\bf f})_r$, $(\bb_3^*{\bf f})_r$, and $(\bb_{12}{\bf f})_r$ are circular arcs of radius $r$ with endpoints indicated such that $(\bb_1{\bf f})_r\subset({\bf b}_1{\bf b}_{12})_r$, $(\cc_1{\bf f})_r\subset(\cc_1\bb_3^*)_r$, $(\bb_3^*{\bf f})_r\subset(\cc_1\bb_3^*)_r$, and $(\bb_{12}{\bf f})_r\subset({\bf b}_1{\bf b}_{12})_r$. It follows that the triangular shape region $\widehat{\triangle}\bb_1\cc_1{\bf f}$ bounded by $(\bb_1{\bf f})_r, \bb_1\cc_1$, and $(\cc_1{\bf f})_r$ has an isometric copy contained in the triangle shape region $\widehat{\triangle}\bb_3^*\bb_{12}{\bf f}$ bounded by $(\bb_3^*{\bf f})_r, \bb_3^*\bb_{12}$, and $(\bb_{12}{\bf f})_r$. This implies that 
\begin{equation}\label{nyolcadik}
{\rm area}_s(\widehat{\triangle}\bb_1\cc_1{\bf f})\leq{\rm area}_s(\widehat{\triangle}\bb_3^*\bb_{12}{\bf f}).
\end{equation} 
Thus, using (\ref{hetedik}), (\ref{nyolcadik}), and the symmetries of ${\bf D}^*$ and ${\bf C}^*$ we get that
\begin{equation}\label{kilencedik}
\frac{1}{6}{\rm area}_s({\bf D}^*\setminus{\bf C}^*)\leq {\rm area}_s(\widehat{\triangle}\bb_1\cc_1{\bf f})\leq{\rm area}_s(\widehat{\triangle}\bb_3^*\bb_{12}{\bf f})\leq \frac{1}{6}{\rm area}_s({\bf C}^*\setminus{\bf D}^*).
\end{equation}
Hence, (\ref{final-inequality}) follows, finishing the proof of Theorem~\ref{BL-generalized}.

\bigskip

\bigskip

\noindent K\'aroly Bezdek \\
\small{Department of Mathematics and Statistics, University of Calgary, Calgary, Canada}\\
\small{Department of Mathematics, University of Pannonia, Veszpr\'em, Hungary}\\
\small{\texttt{bezdek@math.ucalgary.ca}}

\end{document}